\newcounter{part0}[subsection]
\renewcommand{\part}[1][]{\noindent\stepcounter{part0}{\bfseries Part \number\value{part0}:} #1.\par}
\newcounter{part1}[part0]
\newcounter{part2}[part1]
\numberwithin{equation}{section}
\newtheorem{thm}{Theorem}[section]
\newtheorem{hyp}{Hypothesis}
\newtheorem{prop}[thm]{Proposition}
\newtheorem{coro}[thm]{Corollary}
\newtheorem{lemma}[thm]{Lemma}
\newtheorem{clm}[thm]{Claim}
\theoremstyle{remark}
\newtheorem{remark}{Remark}[section] 
\theoremstyle{definition}
\crefname{equation}{}{}
\crefname{subpart}{Part}{Part}
\newenvironment{Eq}{\begin{equation}\begin{aligned}}{\end{aligned}\end{equation}\ignorespacesafterend}
\newenvironment{Eq*}{\begin{equation*}\begin{aligned}}{\end{aligned}\end{equation*}\ignorespacesafterend}
\newcommand{\Th}[1]{Theorem \ref{#1}}
\newcommand{\Le}[1]{Lemma \ref{#1}}
\newcommand{\Se}[1]{Section \ref{#1}}
\newcommand{\Cl}[1]{Claim \ref{#1}}
\newcommand{\Co}[1]{Corollary \ref{#1}}
\renewcommand{\Pr}[1]{Proposition \ref{#1}}
\DeclareMathOperator{\supp}{supp}
\renewcommand{\d}{\, \mathop{\!\mathrm{d}}\!}
\newcommand{\dyw}{:=}
\newcommand{\DS}{W\!S}
\newcommand{\R}{{\mathbb{R}}}
\newcommand{\N}{{\mathbb{N}}}
\newcommand{\Z}{{\mathbb{Z}}}
\renewcommand{\H}{{\dot H}}
\renewcommand{\l}{{\dot l}}
\newcommand{\B}{{\dot B}}
\renewcommand{\L}{{\fl{L}}}
\newcommand{\Roma}[1]{\uppercase\expandafter{\romannumeral#1}}
\newcommand{\fl}[1]{\mathcal{#1}}
\renewcommand{\k}[3]{\mathopen{}\left#1 #2 \right#3}
\newcommand{\g}{{\mathbf{g}}}
\newcommand{\ga}{{\boldsymbol{\gamma}}}
\newcommand{\m}{{\mathbf{m}}}
\renewcommand{\b}{{\mathbf{b}}}
\newcommand{\ppsi}{\bar\psi}
\newcommand{\hiota}{{\hat\iota}}
\newcommand{\al}{\alpha}    \newcommand{\be}{\beta}
\newcommand{\de}{\delta}    
  \newcommand{\ep}{\varepsilon}
\newcommand{\les}{{\lesssim}}
\newcommand{\pa}{\partial}
\title[Strauss type wave system on asymptotically flat space-time]
{Lifespan of solutions to the Strauss type wave system on asymptotically flat space-time}
\author{Wei Dai}
\address{School of Mathematical Sciences\\ Zhejiang University\\ Hangzhou 310027,P.R.China}
\email{daiw16@zju.edu.cn}
\author{Daoyuan Fang}
\address{School of Mathematical Sciences\\ Zhejiang University\\ Hangzhou 310027,P.R.China}
\email{dyf@zju.edu.cn}
\author{Chengbo Wang}
\address{School of Mathematical Sciences\\ Zhejiang University\\ Hangzhou 310027,P.R.China}
\email{wangcbo@zju.edu.cn}
\urladdr{http://www.math.zju.edu.cn/wang}
\date{\today}
\begin{document}

\begin{abstract}
By assuming certain 
local energy estimates on $(1+3)$-dimensional asymptotically flat space-time, we study the existence portion of the  \emph{Strauss} type wave system.
Firstly we give a kind of space-time estimates which are related to the local energy norm that appeared in \cite{MR2944027}.
These estimates can be used to prove a series of weighted \emph{Strichartz} and \emph{KSS} type estimates, for wave equations on asymptotically flat  space-time.
Then we apply the space-time estimates to obtain the lower bound of the lifespan when the nonlinear exponents $p$ and $q\ge 2$. 
In particular,
our bound for the subcritical case 
 is sharp in general and
we extend the  known region of $(p,q)$ to admit global solutions.
In addition, 
 the initial data are not required to be compactly supported, when  $p, q>2$.
\end{abstract}

\keywords{asymptotically flat space-time; Strauss conjecture; lifespan}

\subjclass[2010]{35L05, 35L15, 35L70, 	35B33}

\maketitle

\section{Introduction}

In this paper,
we are interested in the longtime solvability of small-amplitude solutions for the Cauchy problem to some coupled system of semilinear wave equations, posed on  asymptotically flat manifolds.
Let $(M, \g)$ be a $(1+3)$-dimensional asymptotically flat space-time manifold and $\square_\g=\nabla^\mu\nabla_\mu$ be the associated \emph{d'Alembertian} operator,
in which $M=\R^+\times\R^3$ or  $\R^+\times(\R^3\backslash\fl{K})$ with bounded smooth $\fl{K}\subset B(0, R_0)$ for some $R_0>0$,
we will study the system
$$\square_\g u_1=F_{p_1}(u_2), \ \square_\g u_2=F_{p_2}(u_1),$$
where the power type nonlinearities $F_{p}\in C^2$ are assumed to satisfy
\begin{equation}
  \sum_{0\leq j\leq 2} |u|^j |\partial_u^j F_p(u)|\lesssim |u|^p \ \mathrm{ for }\ |u|\ll 1.
\end{equation}
Typical examples include $F_p(u)=\pm |u|^p$ and $F_p(u)=\pm |u|^{p-1}u$. 
It is clear that such a system  is closely related to the \emph{Strauss} conjecture. 

For simplicity of presentation,
we set $\iota\in\{1,2\}$, $\hiota=3-\iota$,
and rewrite the system as follows
\begin{Eq}\label{E1.2}
\begin{cases}
\square_\g u_\iota= F_{p_\iota}(u_\hiota),~(t,x)\in M,\\
(u_\iota,\partial_t u_\iota)|_{t=0}=(f_\iota,g_\iota),
\end{cases}
\qquad \iota=1,2,
\end{Eq}
with sufficiently nice and small initial data $f_\iota, g_\iota$. 

Let $p_1, p_2\ge 2$,
under some natural hypotheses on the metric $\g$,
we shall show that the solution to this system is global if $(p_1,p_2)$ is above certain critical curve. 
On the other hand, 
when $(p_1,p_2)$ is below or on this curve, 
it is known that the problem does not admit global solutions in general, 
and we shall prove certain lower bound for the lifespan, 
which is expected to be sharp at least in the non-critical case.

\subsection{History and some discussion}\label{S1.1}~

When $(M,\g)$ is the standard \emph{Minkowski} space-time  $(\R^{3+1},\m)$ and $u_1=u_2$, 
the problem can be reduced to the problem $\square u=F_p(u)$, 
which is the topic of the \emph{Strauss} conjecture and it is known that the critical power is $p=1+\sqrt{2}$ (\cite{MR535704}).
The study of such problem in general spatial dimensions has gone through a long history and has been almost done,
we refer the interested readers to \cite{MR3724252},
\cite{MR3691393}, and  references therein for the current state of the art.

As for the coupled system  \cref{E1.2}, 
it has been well-investigated
 for the \emph{Minkowski} space-time $(\R^{3+1},\m)$, in
\cite{MR2033494},
\cite{MR1785116}.
Let
\begin{Eq}\label{E1.3}
\sigma(p_1,p_2)=\max_{\iota}\k({\frac{p_\hiota+2+\frac{1}{p_\iota}}{p_1p_2-1}-1}),
\end{Eq}
and $T_\m(\varepsilon)$ be the lifespan of solution to \cref{E1.2} with $F_p(u)=|u|^p$ and initial data of size $\varepsilon$, it is known  that
\begin{Eq*}
\begin{cases}
T_\m(\varepsilon)\approx\varepsilon^{-\frac{1}{\sigma}}&\sigma>0,\\
\ln(T_\m(\varepsilon)) \approx\varepsilon^{-\min_\iota p_\iota(p_1p_2-1)}&\sigma=0,p_1\neq p_2,\\
\ln(T_\m(\varepsilon)) \approx\varepsilon^{- p_\iota(p_\iota-1)}&\sigma=0,p_1= p_2,\\
T_\m(\varepsilon)=\infty& \sigma<0,
\end{cases}
\end{Eq*}
where the lower bound was proven for 
compactly supported small data,
while the upper bound was obtained for data with certain positive conditions.

When it comes to general space-time,
the problem becomes more difficult. 
One of the main difficulties in this situation is that, 
 the fundamental solution is hard to obtain, and even the Fourier analysis is not easy to carry out.
Meanwhile, 
since the characteristic surface is not simply conical, 
many of the obvious conclusions in flat space-time may not be valid for the general case. 

However, 
when assuming that the metric is asymptotically flat, 
the situation becomes relatively well.
Recently, there have been many interesting advances in this situation, 
but we only discuss here the most relevant results for the sake of convenience. 

An important estimate that has permitted such progress is a class of weighted Strichartz estimates, which was developed independently in
\cite{MR2769870}
and \cite{HMSSZ}. It has been shown to be robust under small, asymptotically flat perturbations, see, e.g.,
\cite{SoWa10},
\cite{LMSTW},
 \cite{MR3724252},
\cite{MR3691393}.

The weighted Strichartz estimates are known to be closely related to the 
local energy estimates.
Among many versions of local energy estimates,  
a sharp version of
uniform energy and 
 (micro-localized) local energy estimates was established for wave operators with small, asymptotically flat metric perturbations, in \cite{MR2944027}.
By exploiting it with the trace estimate and some other technical tools, 
the global existence for the \emph{Strauss} problem  in the supercritical case 
($p>1+\sqrt{2}$) is obtained \cite{MR3724252},
for a large class of  asymptotically flat,  space-time manifolds. 
On the other hand, in \cite{MR3691393},
 the local energy estimates of \cite{MR2944027} were further exploited and combined with \emph{KSS} type estimates and weighted trace estimates. Based on these estimates, 
certain lower bound estimates of the lifespan were obtained in the subcritical and critical cases, for the \emph{Strauss} problem. 

Recently, the similar argument was applied for the coupled system in \cite{MR3743157}, and
it is proven  that the solution is global when $\sigma<0$ and $p_\iota>2$, for any initial data which are sufficiently regular and small. It is to be remarked that the case $p_1=2$ and $p_2>3.5$ was excluded in \cite{MR3743157}, for which it is known to be admissible for global results on
\emph{Minkowski} space-time.

In this paper, we
synthesize the approach that appeared in these works further to include general weighted space-time estimates, with fewer restrictions on the exponents,
which is the main departure of our approach.
With the help of these estimates,
 we could get more existence results. In particular, we prove global existence when $\sigma<0$ and $p_\iota\ge 2$.

\subsection{Hypotheses}~

We list some hypotheses which may be adopted in our paper. 
Here we should mention that these hypotheses are satisfied in many general models, 
see \cite{MR3691393} for a detailed discussion.
\begin{hyp}[Space-time assumption]
We shall assume $\g=\g_{\mu\nu}(t,x)\d x^\mu \d x^\nu$ is asymptotically flat in the following sense.
We first assume that $\g$ can be decomposed as
\begin{equation}\tag{H1} \label{H1}
  \g = \m + \ga^1(t,r)+\ga^2(t,x),
\end{equation}
where $\m$ denotes the \emph{Minkowski} metric, 
$\ga^1$ is a radial long range perturbation, 
and $\ga^2$ is a short range perturbation.  
More specifically, 
we assume
$$
\sum_k	\|
	(1+|x|)^{|\alpha|+j-1}
	\partial^{\alpha}\ga^j_{\mu\nu}\|_{L_{t,x}^\infty(1+|x|\sim 2^k)}\leq 
C_{\alpha,j},\qquad j=1,2.
$$
The long range perturbation is radial in the sense that when writing out the metric $\ga^1$ in polar coordinates $(t,x)=(t,r\omega)$ with $\omega\in S^{n-1}$, 
we have
\begin{Eq*}
\ga^1(t,r)=\ga^1_{tt}(t,r)\d t^2+2\ga^1_{tr}(t,r)\d t\d r+\ga^1_{rr}(t,r)\d r^2+\ga^1_{\omega\omega}(t,r)\d \omega^2.
\end{Eq*}
\end{hyp}

\begin{hyp}[Local energy assumption]
We assume that there exists an $R_1 > R_0$, 
such that for any solution to the linear equation $\square_\g u = F$, 
we have the uniform energy estimates and weak local energy estimates, 
which is
\begin{equation}\tag{H2}  \label{H2}
\|\partial^{\leq k}u\|_{L\!E_\g}\lesssim \|\partial^{\leq 1}u(0,x)\|_{H_x^k}+\|F\|_{L_t^1H_x^k},
\end{equation}
for any $k\geq 0$, with 
\begin{Eq}\label{E1.4}
\|u\|_{L\!E_\g}\dyw \|\partial u\|_{L_t^\infty L_x^2}+\|\ppsi_{R_1} \partial u\|_{l_\infty^{-\frac{1}{2}}L_t^2L_x^2}+\|u\|_{l_\infty^{-\frac{3}{2}}L_t^2L_x^2},
\end{Eq}
where $\|f\|_{l_\infty^s L^2_tL^2_x}=
\sup_{k\ge 0}	\|2^{sk}	f\|_{L_{t,x}^2(1+|x|\sim 2^k)}$, and the smooth function
$\ppsi_{R_1}$ equals $1$
for $r\geq R_1+1$ and is supported in $r\ge R_1$.
\end{hyp}

\begin{hyp}[Stationary and split metric assumption]
We assume that there exists a $R_2 > R_0$, 
such that
\begin{equation}\tag{H3}  \label{H3}
\g=\g_{00}(x)\d t^2+\g_{ij}(x)\d x^i\d x^j,\qquad r> R_2.
\end{equation}
\end{hyp}

\subsection{Main results}~

Before giving our existence results for \cref{E1.2},
we present a kind of weighted Strichartz estimates, which is our first main result, for general spatial dimensions $n\ge 2$.

\begin{thm}\label{T1.1}
Let $n\geq 2$, we set for $T\in (0,\infty)$
\begin{Eq*}
A_{\alpha}(T)&\dyw
\begin{cases}
(T+2)^{-\alpha},&\alpha\in(0,\frac{1}{2}],\\
\ln^{-\frac{1}{2}} (T+2),&\alpha=0,\\
1,&\alpha<0.
\end{cases}
\end{Eq*} 
Then for any
$2\leq o\leq q<\infty$, 
$\alpha\leq 1/q$ and $1/2-1/o\leq s\leq (n-\delta)(1/2-1/q)$ for some $\delta>0$, 
we have
\begin{Eq}\label{E1.5}
\|u\|_{\DS_{q,o,\alpha,s}}\dyw A_{q\alpha/2}(T)^{\frac{2}{q}}\|r^{\frac{n}{2}+\alpha-\frac{1}{q}-\frac{n}{o}-s}\ppsi_1u\|_{L_t^q \L_r^o L_\omega^2}\lesssim\|u\|_{W^s\cap X^s},
\end{Eq}
where $\L_r^o=L^o((0,\infty), r^{n-1}dr)$, $W^s$ and $X^s$ are defined
in \cref{E2.1}, which are basically energy and local energy norm at the regularity level $s$.
Moreover, we have
\begin{Eq}\label{E1.6}
\|r^{\frac{n}{2}-\frac{1}{q}-\frac{n}{o}-s}u\|_{L_t^q \L_r^o L_\omega^2}\lesssim\|u\|_{W^s\cap X^s},
\end{Eq}
with $2\leq o,q\leq \infty$ and ${1}/{2}-{1}/{o}<s<n/2-1/q$. 
\end{thm}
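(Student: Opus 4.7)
I would derive both \cref{E1.5,E1.6} by a dyadic decomposition of physical space into annuli $A_k=\{|x|\sim 2^k\}$, combined with interpolation in time between the energy component of $W^s$ (providing $L^\infty_t\dot H^s$ control) and the local-energy component of $X^s$ (providing $L^2_{t,x}$ control on each $A_k$, weighted by the appropriate power of $2^k$ as dictated by the $l_\infty^{-1/2}$ structure of \cref{E1.4}).

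The first ingredient is a fixed-time spatial embedding on each dyadic annulus. Rescaling $A_k$ to the unit annulus, I would combine the trace embedding $H^s(S^{n-1})\hookrightarrow L^2(S^{n-1})$, valid precisely for $s\ge 1/2-1/o$, with a one-dimensional Sobolev embedding in the radial variable, which is exact when $s<n/2-1/q$ (the condition appearing in \cref{E1.6}) and carries a small loss when $s\le (n-\delta)(1/2-1/q)$ (the condition allowed in \cref{E1.5}). The outcome is a bound of the form $\|r^{n/2-n/o-s}u(t)\|_{\L^o_r L^2_\omega(A_k)}\lesssim\|u(t)\|_{\dot H^s(A_k)}$.

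Next, I would interpolate in time: the $W^s$ norm gives $L^\infty_t$ control, while the local-energy piece of $X^s$ gives an $L^2_t L^2(A_k)$ bound with a power of $2^k$ coming from the $l_\infty^{-1/2}$ weight, so Hölder's inequality produces an $L^q_t$ bound for each $q\in[2,\infty]$. Combined with the spatial embedding, each annular contribution to $\|r^{n/2+\alpha-1/q-n/o-s}\ppsi_1 u\|_{L^q_t\L^o_r L^2_\omega(A_k)}$ scales like $2^{k(\alpha-1/q)}$, which is summable in $k\ge 0$ exactly under the hypothesis $\alpha\le 1/q$; the cutoff $\ppsi_1$ removes the low-frequency annuli $k\le 0$.

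The main obstacle is the borderline regime $\alpha\in[0,1/q]$, where the dyadic series either barely converges or just fails to be $T$-uniformly summable. Here, restricting time to $[0,T]$ and applying Hölder on $[0,T]$ to trade the $L^2_t$ local-energy bound against an $L^q_t$ bound produces exactly the claimed factor $A_{q\alpha/2}(T)^{2/q}$: a polynomial gain $T^{-\alpha}$ when $q\alpha/2\in(0,1/2]$ and a logarithmic gain $\ln^{-1/2}T$ at the critical endpoint $\alpha=0$. For \cref{E1.6}, no such $T$-factor appears because the strict inequality $s<n/2-1/q$ places the radial Sobolev embedding strictly subcritical, so the dyadic sum converges unconditionally and no time localization is required. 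Carefully matching the dyadic summation against the $l^\infty$ structure of the local-energy norm, and verifying that the endpoint losses align to yield precisely $A_{q\alpha/2}(T)^{2/q}$, will be the most delicate part of the argument.
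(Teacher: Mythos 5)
Your overall architecture --- a fixed-time trace embedding supplying the $L^\infty_t$ endpoint from $W^s$, a KSS-type $L^2_{t,x}$ bound supplying the $L^2_t$ endpoint from $X^0$ (with the dyadic sum over annuli of radius $\lesssim T$ producing the factor $A_{q\alpha/2}(T)^{2/q}$), and an interpolation in between --- is indeed the skeleton of the paper's proof of \cref{E1.5}, and interpolating the left-hand sides by H\"older in $t$ and $r$ is legitimate. The first genuine gap is on the right-hand side. H\"older gives you the product $\|u\|_{W^0\cap X^0}^{2/q}\,\|u\|_{W^{s_1}\cap X^{s_1}}^{1-2/q}$ with $s_1=qs/(q-2)$, and there is no elementary passage from this product to the single norm $\|u\|_{W^s\cap X^s}$: the convexity inequality $\|u\|_{W^s\cap X^s}\les\|u\|_{W^0\cap X^0}^{2/q}\|u\|_{W^{s_1}\cap X^{s_1}}^{1-2/q}$ goes in the wrong direction, and applying the product bound to each Littlewood--Paley piece $S_k u$ and summing only controls the left side by the strictly larger $\ell^1$-based norm $\|u\|_{W_1^s\cap X_1^s}$, which is a weaker statement than \cref{E1.5}. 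The paper closes this by genuine real/complex interpolation of the spaces $W^s\cap X^s$ (\Cl{C3.3}), which in turn requires realizing them as retracts of vector-valued sequence spaces; your proposal has no substitute for this step. Relatedly, your fixed-time embedding is phrased with $\dot H^s(A_k)$ on a single annulus for fractional $s$; such localized fractional norms do not square-sum back to $\|u(t)\|_{\dot H^s}$, which is exactly why the paper never abandons the frequency decomposition. (A small sign the embedding was not checked: the condition $s\ge 1/2-1/o$ is the one-dimensional radial Sobolev exponent for $\L_r^o$, not a condition for $H^s(S^{n-1})\hookrightarrow L^2(S^{n-1})$, which holds for every $s\ge 0$.)

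The second gap concerns \cref{E1.6}. Your scheme puts all the radial integrability on the $L^\infty_t$ endpoint and takes the $L^2_t$ endpoint to be the plain local-energy bound in $L^2_x$. But \cref{E1.6} is asserted for all $2\le o,q\le\infty$, in particular for $q=2$ with $o>2$ (even $o=\infty$); no amount of H\"older in time against an $L^\infty_t$ estimate can raise the radial integrability of an $L^2_tL^2_x$ bound above $\L_r^2$. What is required is a trace estimate taken directly in the $X^s$ scale, $\|r^{(n-1)/2-n/o-s}u\|_{L^2_t\L_r^oL^2_\omega}\les\|u\|_{X^s}$ (\cref{E3.8}), which the paper derives in \Le{L3.4} and \Le{L3.5} from a weighted fractional trace estimate proved by complex interpolation together with the Stein--Weiss inequality, and then combines with \cref{E3.2}. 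This ingredient is absent from your proposal and is not recoverable from the tools you list. Your explanation for the absence of a $T$-factor in \cref{E1.6} is also misplaced: the uniformity in $T$ comes from the strict inequality $s>1/2-1/o$, which leaves room to apply \cref{E3.2} with a strictly positive exponent so that the dyadic sum converges absolutely over all annuli, rather than from $s<n/2-1/q$, which is the upper constraint needed for the endpoint regularities of the interpolation to stay below $n/2$ and $(n-1)/2$.
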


The estimates in \Th{T1.1}, together with \Le{L4.5}, can deduce a series of important weighted \emph{Strichartz} estimates, which, in turn, provide the following desired lower bound of lifespan to problem \cref{E1.2}.

\begin{thm}\label{T1.2}
Let $n=3$, $p_1, p_2\ge 2$, 
\eqref{H1} and \eqref{H2}.
Without loss of generality we assume $p_1\leq p_2$. 
For $\sigma$ defined in \cref{E1.3} we set
\begin{Eq}\label{E1.7}
T_\varepsilon&\dyw
\begin{cases}
c\varepsilon^{-\frac{1}{\sigma}}&\mathrm{if }\ \sigma>0,\\
\exp(c\varepsilon^{-2(p_1-1)})&\mathrm{if }\ \sigma=0,p_1>2,\\
\exp(c\varepsilon^{-2+\delta})&\mathrm{if }\ \sigma=0,p_1=2,\\
\infty &\mathrm{if }~\sigma<0.
\end{cases}
\end{Eq}
Then
the problem \cref{E1.2} admits solution up to $T_\varepsilon$ with any fixed $\delta>0$, 
for 
compactly supported
 initial data, which are sufficiently regular and small of size $\varepsilon$.
Moreover,
if $p_\hiota>2$, 
$(f_\iota,g_\iota)$ do not need to be compactly supported.
In addition, for any $p_1, p_2\ge 2$,
if \eqref{H3} is satisfied, 
 the initial data do not need to be compactly supported.
\end{thm}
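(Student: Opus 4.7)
The plan is to run a Picard iteration $\square_\g u^{(k+1)}_\iota = F_{p_\iota}(u^{(k)}_\hiota)$ with the prescribed initial data inside a product space $Y = Y_1 \times Y_2$ on $[0,T] \times M$. Guided by \Th{T1.1} and \eqref{H2}, each component norm is
\begin{Eq*}
\|u_\iota\|_{Y_\iota} = \|u_\iota\|_{W^{s_\iota} \cap X^{s_\iota}} + \|u_\iota\|_{\DS_{q_\iota, o_\iota, \alpha_\iota, s_\iota}},
\end{Eq*}
with parameters chosen inside the admissibility range of \Th{T1.1}; when $p_\hiota = 2$ an extra interior piece supplied by the unweighted estimate \cref{E1.6} is also incorporated. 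Convergence of the iterates then follows from the same nonlinear estimate applied to differences via $|F_{p_\iota}(u)-F_{p_\iota}(v)| \les (|u|+|v|)^{p_\iota-1}|u-v|$.

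Two estimates drive the argument. First, combining \eqref{H2} with \Th{T1.1} gives the linear bound $\|u^{(k+1)}_\iota\|_{Y_\iota} \les C\varepsilon + \|F_{p_\iota}(u^{(k)}_\hiota)\|_{N_\iota}$ for an appropriate dual norm $N_\iota$, the $\varepsilon$ contribution coming from the data (which, after a compactly supported cutoff, have bounded weighted norms). Second, Hölder in the weighted $\DS$ norm together with $|F_{p_\iota}(u)| \les |u|^{p_\iota}$ yields
\begin{Eq*}
\|F_{p_\iota}(u_\hiota)\|_{N_\iota} \les B_\iota(T)\,\|u_\hiota\|_{Y_\hiota}^{p_\iota},
\end{Eq*}
where $B_\iota(T)$ absorbs the $A_{q\alpha/2}(T)^{2/q}$ prefactor of \cref{E1.5} raised to a suitable power. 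Picking the parameters so that scaling, Hölder pairing and data localization all match, the contraction condition $\max_\iota B_\iota(T)\,\varepsilon^{p_1-1} \les 1$ turns out to be exactly the bound $T \les T_\varepsilon$ of \cref{E1.7}, since the maximum in \cref{E1.3} defining $\sigma$ encodes precisely the worse of the two Hölder scalings. The subcritical case $\sigma>0$ gives $T_\varepsilon \approx \varepsilon^{-1/\sigma}$; the critical case $\sigma=0$, $p_1>2$ produces the exponential lifespan from the $\ln^{-1/2}$ factor in $A_0$; the endpoint $\sigma=0$, $p_1=2$ is handled by nudging $\alpha$ slightly downward, trading the forbidden logarithm for the $\delta$ loss in the exponent $-2+\delta$ of \cref{E1.7}; and for $\sigma<0$ one can take $\alpha_\iota<0$, making $B_\iota$ bounded uniformly in $T$ and yielding global solvability.

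For data that are not compactly supported but carry polynomial decay, when $p_\hiota>2$ we instead control the free evolution in $Y_\iota$ via the unweighted estimate \cref{E1.6}, whose admissibility region is strictly wider than that of \cref{E1.5}, so the decay of the data translates directly into the weighted smallness required. Under \eqref{H3}, the stationarity of $\g$ outside $\{r>R_2\}$ combined with finite propagation speed reduces the exterior behaviour to a time-translation invariant problem, removing the support restriction for all $p_\iota\ge 2$.

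The main obstacle is the simultaneous selection of the eight parameters $\{(q_\iota,o_\iota,\alpha_\iota,s_\iota):\iota=1,2\}$ so that both quadruples lie in the admissibility region of \Th{T1.1}, the two Hölder pairings match in scaling, the Sobolev / trace constraint $s_\iota \geq 1/2-1/o_\iota$ holds, and the resulting threshold collapses exactly to $T_\varepsilon$. The critical curve $\sigma=0$ lies on the boundary of feasibility, which is why $p_1=2$ forces the $\delta$ loss, and why the unrestricted range $\alpha\le 1/q$ in \Th{T1.1} is precisely what opens up the corner $p_1=2$, $p_2>3.5$ previously excluded in \cite{MR3743157}.
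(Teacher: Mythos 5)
Your skeleton (contraction in weighted Strichartz plus local energy norms, H\"older pairing of the two equations, the $A_{\alpha}(T)$ prefactor dictating $T_\varepsilon$) matches the paper's strategy, but two essential ingredients are missing and one key case is explained incorrectly. First, your iteration norm $\|u_\iota\|_{W^{s_\iota}\cap X^{s_\iota}}+\|u_\iota\|_{\DS_\iota}$ carries no vector fields. The $\DS$ norms are $L^2_\omega$-based, and $\||u|^{p}\|_{L^{2/p}_\omega}$ is not controlled by $\|u\|_{L^2_\omega}^{p}$; the paper closes the nonlinear estimate only after commuting with $Z^{\leq 2}$ and using Sobolev embedding on the sphere, working with $\|\ppsi_R Z^{\leq 2}u_\iota\|_{\DS_\iota\cap\widetilde\DS_\iota}+\|Z^{\leq 2}u_\iota\|_{L\!E_\g}$ (the norms $U^m_\iota$, $V^m_\iota$ of \cref{E4.8}--\cref{E4.9}). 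This forces the whole commutator analysis of \Le{L4.7}: $\square_\g(\ppsi_R Z^{\leq m}u)$ produces $[\square_\g,\ppsi_R]$ and $[\square_\g,Z^{\leq m}]$ terms which must be absorbed by the dual KSS/trace estimates and by \eqref{H2}; this is precisely where the hypotheses enter, and it is absent from your outline. Relatedly, \Th{T1.1} only applies after cutting off near the origin and gluing the metric to Minkowski there (\Co{C4.6}), so the $L\!E_\g$ component is indispensable for \emph{all} exponents (it controls both the interior part of the nonlinearity and the cutoff commutators), not merely an "extra interior piece" for $p_\hiota=2$.

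Second, the $p_1=2$ endpoint. The real obstruction is that $s_2=0$, while the homogeneous estimate of \Co{C4.6} requires $s>0$ (it comes from \Le{L4.5} applied to $\nabla u$ at regularity $s-1$, needing $|s-1|<1$). The paper circumvents this with the duality trick of \cite{LMSTW}: one proves only the \emph{inhomogeneous} $s=0$ estimate by dualizing the $W^1\cap X^1$ bound for the backward problem, which requires $\ppsi_R Z^{\leq 2}u_2$ to have vanishing Cauchy data --- this is exactly why compact support of the data is imposed when $p_\hiota=2$, a point your proposal does not explain. Your "nudge $\alpha$ downward" mechanism is not what produces the $\delta$-loss either: the loss comes from replacing $q_\iota=2p_\hiota$ by $(2-\delta)p_\hiota$ because the pair $q=2$, $s=1$ fails the admissibility condition of \cref{E1.6} when $p_1=2$. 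Finally, the reason compact support is unnecessary when $p_\hiota>2$ is simply that $s_\iota>0$ so the homogeneous estimate applies to data with finite weighted norms \cref{E4.2} --- it has nothing to do with the admissible region of \cref{E1.6} being wider than that of \cref{E1.5}; and under \eqref{H3} the paper recovers the homogeneous $s=0$ estimate directly from the stationary split structure rather than by a finite-propagation-speed reduction.
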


\begin{remark}
For more precise statement of \Th{T1.2}, 
see \Th{T4.1}. 
By comparing with the result of \emph{Minkowski} space,  as discussed in \Se{S1.1}, 
we know that the lower bound in our result is sharp in general for the subcritical and supercritical situations. 
But it is still possible to improve it in critical situation.
\end{remark}

\begin{remark}
When $F_{p_1}=F_{p_2}$, \Th{T1.2} yields the result 
for $\square_\g u=F_p(u)$, 
 \Co{C4.2}. For
 $p\in (2, 1+\sqrt{2}]$, this result removes the technical assumption \eqref{H3} for 
 the corresponding result in \cite{MR3691393}.
In particular, we have the improved lower bound of the lifespan
$$T_*\ge \exp(c\varepsilon^{-2\sqrt{2}})$$
for the critical wave equations (with $p=1+\sqrt{2}$), posed on Kerr space-times with small angular momentum
$a\ll M$.
\end{remark}

\begin{remark}
The same results, at least for $p_2\geq p_1>2$, apply for
general operators $P=\square_\m+a^{\mu\nu}\partial_\mu\partial_\nu+b^\mu\partial_\mu+c$, where $a,b,c$ have sufficient decay and regularity.
The only difference in the proof is that the admissible range of $s$ in \Le{L4.5} shrinks to $s\in (-1, 0)$ if there exists zero order term in $P$. 
\end{remark}

\section{Notations}\label{S2}

We list here some notations which will be used.
Firstly, the \emph{Einstein} summation convention is used, 
as well as the convention that Greek indices $\mu, \nu, \cdots$ range from $0$ to $n$ while Latin indices $i, j, \cdots$ will run from $1$ to $n$.

Secondly, the vector fields to be used will be labeled as
\begin{alignat*}{2}
&\partial\dyw \{\partial_\mu\}=(\partial_t, \partial_x ), &\qquad&\Omega\dyw \{\Omega_{jk}=x^k\partial_j-x^j\partial_k\},\\
&Y\dyw\{\partial_x,\Omega\},&&Z\dyw\{\partial, \Omega\}.
\end{alignat*}
For any norm $A$ and a nonnegative integer $k$, 
we shall use the shorthand
\begin{Eq*}
\partial^{\leq k} f \dyw \{\partial^\alpha f\}_{0\leq|\alpha|\leq k},\quad \|\partial^{\leq k} f\|_A= \sum_{|\alpha|\leq k} \|\partial^\alpha f\|_A,
\end{Eq*}
with the obvious modification for other vector fields.

Next, 
we shall use some auxiliary functions. Let $\phi\in C_0^\infty(\R)$ with $\supp\phi\subset[1/2,2]$, 
$\phi_j(x)=\phi(|x|/2^j)$ and $\sum_{j\in \Z}\phi_j^2=1$ for any $0\neq x\in\R^n$. 
Also, we define $S_j$ to be the corresponding homogeneous Littlewood-Paley projection. 
We also fix a class of smooth functions $\psi_R$ satisfying $0\leq\psi_R\leq 1$ and
\begin{Eq*}
\psi_R(x)\dyw
\begin{cases}
1,&r\leq R,\\0,&r\geq R+1,
\end{cases}
\end{Eq*}
meanwhile, 
we denote $\ppsi_R=1-\psi_R$.

Then,
we denote the shorthand for some norm spaces,
\begin{Eq*}
\|f(x)\|_{\L_r^p L_\omega^b}\dyw&
\|\|f(r\omega)\|_{L^b_\omega(S^{n-1})}\|_{L^p_r ((0,\infty), r^{n-1}dr)}
\ , \\
\|f(x)\|_{\l_q^s(A)}\dyw&\|2^{js}\phi_j(x)f(x)\|_{l_{j\in\Z}^qA}\ ,
\end{Eq*}
and $l_q^s(A)$ is defined similarly with the inhomogeneous dyadic decomposition. 
We also denote the $W^s=W_2^s$ norm and $X^s=X_2^s$ norm by
\begin{Eq}\label{E2.1}
&\|f\|_{W_p^s}=\|2^{ks} S_k f\|_{l_{k\in\Z}^p L_t^\infty L_x^2},\qquad \|f\|_{X_p^s}=\|2^{ks} S_k f\|_{l_{k\in\Z}^p X_k},\\
&\|f\|_{X_k}=2^{\frac{k}{2}}\|f\|_{L^2(A_{\leq -k})}+\sup_{j>-k}\||x|^{-1/2} f\|_{L^2(A_{j})}, 
\end{Eq}
with $A_j=\R_+\times\{|x|\sim 2^j\}$, 
$A_{\le -k}=\cup_{j\le -k} A_j,~j, k\in \Z$. 
Moreover, 
we usually omit $T>0$ in the norm when it is clear from the context that 
the norm is taken for $t\in[0,T]$ for the given $T>0$.

Finally, 
for the writing convenience, 
$x\lesssim y$ and $y\gtrsim x$ mean $x\leq Cy$ for some $C>0$, which may change from line to line.
Similarly, $x\approx y$ means that $x\lesssim y\lesssim x$.
We also denote $x/0=\infty$ for any $x>0$ and $x/\infty=0$ for any $-\infty<x<\infty$.  

\section{Proof of \Th{T1.1}}

We begin with a sketch for the proof of \Th{T1.1}. 
First of all, 
we record a property of $X^s$ space. 

\begin{prop}[Lemma 1 of \cite{MR2944027}]
\label{L3.1}
Let $n\geq 2$, 
we have
\begin{Eq}\label{E3.1}
\|u\|_{\l^{-\frac{1}{2}}_\infty L_t^2L_x^2}\lesssim \|u\|_{X^{0}},
\end{Eq}
and when $0<s<{(n-1)}/{2}$ we have
\begin{Eq}\label{E3.2}
\|r^{-\frac{1}{2}-s}u\|_{L_t^2 L_x^2}\lesssim\|u\|_{X^s}.
\end{Eq}
\end{prop}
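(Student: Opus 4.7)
The plan is to prove both estimates by Littlewood--Paley decomposing $u = \sum_j S_j u$ and matching each frequency piece to the appropriate summand of the $X_j$ norm, which is essentially the bookkeeping argument of Lemma 1 in \cite{MR2944027}.

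For \eqref{E3.1}, the goal is $\sup_k 2^{-k/2}\|u\|_{L^2_{t,x}(|x|\sim 2^k)} \lesssim \|S_j u\|_{\ell^2_j X_j}$. I would fix a spatial dyadic scale $k \in \Z$ and estimate each $\|S_j u\|_{L^2(A_k)}$ by splitting into two regimes. When $j > -k$, the annulus $A_k = \R_+ \times \{|x|\sim 2^k\}$ lies in the exterior region controlled by the second summand of the $X_j$ norm, so $\|S_j u\|_{L^2(A_k)} \le 2^{k/2}\|S_j u\|_{X_j}$ directly. When $j \le -k$, the annulus sits inside the low-frequency concentration region $A_{\le -j}$, and the first summand of the $X_j$ norm combined with a Bernstein-type volume bound for frequency-$2^j$ functions produces a geometric decay factor $2^{c(j+k)}$ for some $c > 0$. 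Multiplying by $2^{-k/2}$, summing the resulting geometric series in $j$, and applying Cauchy--Schwarz against the $\ell^2_j$ index produces the sought $\ell^\infty_k$ bound.

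For \eqref{E3.2} the same decomposition is used, but now the weight $r^{-1/2-s}$ contributes $2^{-(1/2+s)k}$ on $A_k$ and the outer norm is $\ell^2_k$ rather than $\ell^\infty_k$. A Schur-test style estimate on the resulting double sum over $(j,k)$ then closes the bound. The two endpoints $0 < s < (n-1)/2$ enter at exactly this step: $s > 0$ secures geometric summability in the high-frequency regime $j > -k$, while $s < (n-1)/2$ balances the Bernstein loss against the weight in the low-frequency regime $j \le -k$. The only real obstacle is the near-regime Bernstein step, which quantifies how a frequency-$2^j$ function spreads across a ball of radius $2^{-j}$; I would defer the careful geometric accounting to \cite{MR2944027}, since this proposition is recorded only to fix notation for the proof of \Th{T1.1} that follows.
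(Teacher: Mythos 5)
First, note that the paper does not prove this proposition at all: it is quoted verbatim as Lemma 1 of \cite{MR2944027}, so there is no in-paper argument to compare against. Judged as a standalone proof, your sketch of \eqref{E3.2} is essentially the correct (and the standard) argument: with the weight $r^{-1/2-s}$ the kernel of the double sum over (frequency $j$, annulus $k$) decays like $2^{-s(j+k)}$ when $j+k>0$ and like $2^{(j+k)((n-1)/2-s)}$ when $j+k\le 0$, so the two hypotheses $s>0$ and $s<(n-1)/2$ give two-sided geometric decay and the Schur test closes, exactly as you say.

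The gap is in \eqref{E3.1}. In the high-frequency regime $j>-k$ your bound is $2^{-k/2}\|S_j u\|_{L^2(A_k)}\le \|S_j u\|_{X_j}$ with constant one and \emph{no} decay in $j$: this is the $s=0$ endpoint, so the factor $2^{-s(j+k)}$ that saves you in \eqref{E3.2} is absent. There is therefore no ``geometric series in $j$'' to sum, and ``Cauchy--Schwarz against the $\ell^2_j$ index'' cannot be applied, because after the triangle inequality you are left with $\sum_{j>-k}\|S_j u\|_{X_j}$, an $\ell^1$-type sum over infinitely many frequencies that is not controlled by $\bigl(\sum_j\|S_j u\|_{X_j}^2\bigr)^{1/2}$. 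The failure is not cosmetic: taking $N$ frequency blocks each of unit $X_j$-norm concentrated on the same annulus shows the triangle-inequality route loses a factor $N^{1/2}$. The missing ingredient is almost-orthogonality of the pieces $\phi_k S_j u$ in $L^2$ for $j>-k$ (a fattened cutoff at spatial scale $2^k$ only smears Fourier support by $O(2^{-k})\ll 2^j$, so distinct blocks remain essentially orthogonal, and the cross terms decay rapidly in $|j-j'|$); this lets one bound $\|u\|_{L^2(A_k)}^2$ by the \emph{square} sum $\sum_j\|S_j u\|^2_{L^2(\tilde A_k)}$ plus the Bernstein-controlled low-frequency contribution, which is how Lemma 1 of \cite{MR2944027} is actually proved. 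Your low-frequency step ($j\le -k$, Bernstein giving $2^{(j+k)(n-1)/2}$) is fine for both estimates.
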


Then, 
based on the well known \emph{KSS} type estimates, 
for $\alpha\leq 1/2$ we get
\begin{Eq}\label{E3.3}
A_{\alpha}(T)\|\k<r>^{-\frac{1}{2}+\alpha}u\|_{L_t^2L_x^2}\lesssim\|u\|_{L_t^\infty L_x^2\cap l^{-\frac{1}{2}}_\infty L_{t,x}^2}\lesssim\|u\|_{W^0\cap X^0}.
\end{Eq}
Here the last inequality comes from \cref{E3.1} and \emph{Minkowski} inequality, 
and the first inequality follows from the typical \emph{KSS} type estimate, 
see, e.g., 
\cite[Appendix 7.2]{JWY12}, \cite{MR2217314}
 for its proof.

To prove \cref{E1.5}, 
we need the following trace estimates.

\begin{lemma}[Trace estimates]\label{L3.2}
Let $n\geq 2$, 
for $2\leq o\leq \infty$ and $1/2-1/o\leq s<n/2$, 
we have
\begin{Eq}\label{E3.4}
\|r^{\frac{n}{2}-\frac{n}{o}-s}u\|_{\L_r^oL_\omega^2}\lesssim
\begin{cases}
\|u\|_{\dot B_{2,1}^{\frac{1}{2}}}&o=\infty,s=\frac{1}{2},\\
\|u\|_{\H_x^s}&else.
\end{cases}
\end{Eq}
\end{lemma}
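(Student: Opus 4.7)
The plan is to use a homogeneous Littlewood-Paley decomposition $u=\sum_k S_k u$ combined with scaling. A direct change of variables shows that both sides of the claimed estimate scale identically: if $f$ has Fourier support in $\{|\xi|\sim 2^k\}$, then writing $g(x)=f(x/2^k)$ one verifies $\|r^{n/2-n/o-s} f\|_{\L_r^o L_\omega^2}=2^{-k(n/2-s)}\|r^{n/2-n/o-s} g\|_{\L_r^o L_\omega^2}$ and $\|f\|_{L^2}=2^{-kn/2}\|g\|_{L^2}$. Thus once the unit-frequency inequality $\|r^{n/2-n/o-s} S_0 u\|_{\L_r^o L_\omega^2}\lesssim \|S_0 u\|_{L^2}$ is established, every dyadic piece obeys $\|r^{n/2-n/o-s} S_k u\|_{\L_r^o L_\omega^2}\lesssim 2^{ks}\|S_k u\|_{L^2}$.

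For the unit-frequency estimate, I would combine Sobolev embedding on the sphere $S^{n-1}$, so that a frequency-$1$ function has $\|S_0 u(r\omega)\|_{L_\omega^2}\lesssim \|S_0 u\|_{L^2}$ pointwise in $r$ via Bernstein on the sphere, with a radial interpolation argument interpolating this $L_r^\infty$ bound against the identity $\int_0^\infty r^{n-1}\|S_0 u(r\omega)\|_{L_\omega^2}^2\, dr=\|S_0 u\|_{L^2}^2$. The two natural endpoints are the trivial case $(o,s)=(2,0)$ and the classical radial trace $(o,s)=(\infty,1/2)$. Spherical-harmonic decomposition, combined with the angular regularity a frequency-$1$ function enjoys (inherited from Bernstein applied in the angular direction), then covers the full admissible range $1/2-1/o\leq s<n/2$; the lower bound $s\ge 1/2-1/o$ is precisely what makes the interpolation exponents admissible.

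Summing over $k$ by the triangle inequality gives $\|r^{n/2-n/o-s} u\|_{\L_r^o L_\omega^2}\leq \sum_k 2^{ks}\|S_k u\|_{L^2}=\|u\|_{\B_{2,1}^s}$, which at the endpoint $(o,s)=(\infty,1/2)$ is exactly the claimed $\B_{2,1}^{1/2}$ bound. For every other admissible $(o,s)$, I would sharpen this summation to recover $\H^s$ rather than $\B_{2,1}^s$: either via a Littlewood-Paley square-function estimate in the weighted mixed-norm space $\L_r^o L_\omega^2$ (available once $o\geq 2$), or by a direct Cauchy-Schwarz that exploits the strict inequality $s>1/2-1/o$ to absorb a summable geometric factor $(\sum_k 2^{2k(s-s')})^{1/2}$ for a suitable $s'<s$.

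The hard part will be the unit-frequency estimate along the edge $s=1/2-1/o$ of the admissible region, where one must combine the angular Bernstein decay on $S^{n-1}$ with the radial interpolation sharply, and the Besov-to-Sobolev upgrade in the mixed-norm setting for non-endpoint $(o,s)$, which forces the use of mixed-norm Littlewood-Paley theory rather than the naive triangle inequality.
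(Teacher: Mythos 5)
Your route --- homogeneous Littlewood--Paley decomposition, scaling down to a unit-frequency estimate, then resumming --- is genuinely different from the paper's, which never localizes in frequency: the paper takes the two endpoint inequalities $\|r^{n/2-s_0}u\|_{\L_r^\infty L_\omega^2}\lesssim\|u\|_{\B_{2,1}^{s_0}}$ (classical trace) and $\|r^{-s_1}u\|_{\L_r^2L_\omega^2}\lesssim\|u\|_{\H_x^{s_1}}$ (Hardy) and applies real interpolation with functor $(\theta,o)$, $\theta=2/o$, implemented by dualizing to the predual weighted $\L_r^{o'}L_\omega^2$ spaces; the crucial point is that the output space on the right is $(\B_{2,1}^{s_0},\B_{2,2}^{s_1})_{\theta,o}=\B_{2,o}^{s}\supset\H_x^s$ because $o\ge 2$. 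Your scaling computation is correct, and your unit-frequency bound does hold on the whole range $1/2-1/o\le s<n/2$ (integrability at $r=0$ needs $s<n/2$; decay at $r=\infty$ needs the trace decay $r^{-(n-1)/2}$ together with $s\ge 1/2-1/o$, exactly as you say). Your summation also recovers the stated conclusion in two regimes: the $(o,s)=(\infty,1/2)$ endpoint, where $\B_{2,1}^{1/2}$ is all that is claimed, and the interior $s>1/2-1/o$, where the Cauchy--Schwarz trick with an $\varepsilon$ of room in $s$ (using the dyadic bound at $s\pm\varepsilon$ and the summable factor $\min((2^kr)^{\varepsilon},(2^kr)^{-\varepsilon})$) legitimately upgrades $\ell^1_k$ to $\ell^2_k$.

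The genuine gap is the edge $s=1/2-1/o$ with $2<o<\infty$, where the lemma still asserts the $\H_x^s$ (not $\B_{2,1}^s$) bound. There is no $\varepsilon$ of room there, so your second alternative is unavailable, and your first alternative --- a one-sided Littlewood--Paley square-function inequality $\|r^a\sum_kS_ku\|_{\L_r^oL_\omega^2}\lesssim\|r^a(\sum_k|S_ku|^2)^{1/2}\|_{\L_r^oL_\omega^2}$ in the power-weighted mixed radial--angular space --- is asserted, not proved. This is not an off-the-shelf fact: standard (even weighted $A_p$) square-function theory applies to $L^o(\R^n)$, not to $\L_r^oL_\omega^2$ with the scaling-critical weight $r^{(n-1)(o-2)/2}$, and the edge case is precisely the content of Li--Zhou's Theorem 2.10 cited by the paper, whose known proofs (including the interpolation proof the paper reproduces) obtain the $\ell^2_k$ summation from the third Besov index in $(\cdot,\cdot)_{\theta,o}$ rather than from orthogonality in the target space. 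Until that mixed-norm square-function estimate is supplied, your argument proves the lemma only off this edge; everything else in the proposal is sound.
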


Now,
to combine \cref{E3.3} with \cref{E3.4}, 
we need an interpolation property.

\begin{clm}\label{C3.3}
Consider $s_0\neq s_1$, 
set $s=(1-\theta)s_0+\theta s_1$ with $\theta\in(0,1)$, 
for complex interpolation, 
we have
\begin{Eq}\label{E3.5}
W^s\cap X^s= [W^{s_0}\cap X^{s_0},W^{s_1}\cap X^{s_1}]_{\theta},
\end{Eq}
and for real interpolation, 
we have
\begin{Eq}\label{E3.6}
W_p^s\cap X_p^s= (W_{p_0}^{s_0}\cap X_{p_0}^{s_0},W_{p_1}^{s_1}\cap X_{p_1}^{s_1})_{\theta,p},\qquad \forall  p,p_0,p_1\in[1,\infty].
\end{Eq}
Meanwhile,
these results are also correct if we consider the $W^s$ or $X^s$ norm separately.
\end{clm}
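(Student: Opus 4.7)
The plan is to realize $W_p^s$, $X_p^s$, and $W_p^s\cap X_p^s$ as retracts of a common weighted sequence-valued space via the Littlewood-Paley decomposition, so that the desired identities reduce to classical interpolation for $\ell^p$-valued sequence spaces. Let $\tilde S_k$ denote a slight frequency enlargement of $S_k$ with $\tilde S_k S_k = S_k$. Define
\begin{Eq*}
T: f\mapsto (S_k f)_{k\in\Z}, \qquad R: (g_k)_{k\in\Z}\mapsto \sum_k \tilde S_k g_k,
\end{Eq*}
so that $R\circ T=\mathrm{id}$. My target sequence space is
\begin{Eq*}
\fl{E}_p^s \dyw \ell^p_{k\in\Z}\bigl(2^{-ks}(L^\infty_tL^2_x\cap X_k)\bigr),
\end{Eq*}
and the goal is to show that $T$ and $R$ are bounded in both directions between $W_p^s\cap X_p^s$ and $\fl{E}_p^s$, with analogous statements for the individual spaces.

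First I would verify these retract bounds. Boundedness of $T$ is automatic from the definitions of $W_p^s$ and $X_p^s$. For $R$, the $L^\infty_tL^2_x$ factor follows from standard Littlewood-Paley almost-orthogonality plus the triangle inequality. The nontrivial input is the uniform-in-$k$ boundedness of $\tilde S_k$ on $X_k$: recalling that $X_k=2^{k/2}L^2(A_{\le -k})+\sup_{j>-k}\||x|^{-1/2}\cdot\|_{L^2(A_j)}$, the kernel of $\tilde S_k$ has $L^1$-mass concentrated at scale $2^{-k}$, which is precisely the scale separating the two regimes in the definition of $X_k$. Hence the action of $\tilde S_k$ redistributes $L^2$ mass across nearby annuli $A_j$ in a controlled way, with summable tails that can be absorbed by the supremum structure of $X_k$.

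Once the retract structure is established, the abstract retract principle (Bergh-L\"ofstr\"om, Theorem 6.4.2) reduces both \cref{E3.5} and \cref{E3.6} to the corresponding identities at the sequence level. Absorbing the weights $2^{-ks_i}$ by the substitution $g_k\mapsto 2^{ks_i}g_k$ on each endpoint turns these into the classical interpolation identities for unweighted $\ell^p$-valued sequence spaces (Bergh-L\"ofstr\"om, Theorems 5.1.2 and 5.6.3 for the complex and real methods, respectively, with any $p,p_0,p_1\in[1,\infty]$). A crucial simplification is that $W^s$ and $X^s$ are built from the \emph{same} projectors $S_k$, so the intersection on the sequence side corresponds to a single norm with values in $L^\infty_tL^2_x\cap X_k$, side-stepping the generally delicate interpolation-of-intersections issue. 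Specializing the target to one factor at a time gives the separate statements for $W^s$ and $X^s$. The main obstacle, as emphasized, is the uniform $X_k$-bound on $\tilde S_k$; once that kernel estimate is in hand, everything else is routine abstract interpolation machinery.
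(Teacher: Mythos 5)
Your overall strategy --- realizing $W_p^s\cap X_p^s$ as a retract of a weighted, vector-valued sequence space and then quoting Bergh--L\"ofstr\"om together with the retract principle --- is the same as the paper's, but your reduction at the sequence level has a genuine gap. The weights cannot be ``absorbed'': since $s_0\neq s_1$, the substitutions $g_k\mapsto 2^{ks_0}g_k$ and $g_k\mapsto 2^{ks_1}g_k$ differ on the two endpoints, so no single change of variables turns the couple into unweighted spaces. This is not cosmetic. For the real method, the unweighted identity $(\ell^{p_0}(A),\ell^{p_1}(A))_{\theta,p}=\ell^{p}(A)$ is simply false for general $p$ (one gets a Lorentz sequence space when $p_0\neq p_1$, and $\ell^{p_0}(A)$ itself when $p_0=p_1$, e.g.\ $(\ell^1(A),\ell^1(A))_{\theta,\infty}=\ell^1(A)\neq\ell^\infty(A)$); the ``any $p,p_0,p_1$'' conclusion in \cref{E3.6} is exactly the feature that requires \emph{distinct power weights}, i.e.\ Theorem 5.6.1 of Bergh--L\"ofstr\"om (real method, $s_0\neq s_1$) and Theorem 5.6.3 (complex method), which is what the paper invokes. (You have also swapped the citations: 5.6.3 is the complex-method statement, and 5.1.2 requires the Riesz--Thorin relation on the exponents.)

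A second, structural issue is that Theorems 5.6.1 and 5.6.3 are stated for a \emph{fixed} fiber $A$, whereas your target $\ell^p_{k}(2^{ks}(L_t^\infty L_x^2\cap X_k))$ has a fiber that genuinely depends on $k$ (the threshold $j>-k$ and the factor $2^{k/2}$ in $X_k$ move with $k$). One can extend these theorems to varying fibers, but that is an additional argument, not ``routine abstract machinery''; eliminating this dependence is precisely the purpose of the paper's finer, triple-indexed co-retraction $Qf=({\bf 1}_{l=k}\,\phi(\langle 2^kx\rangle/2^j)S_kf)_{j,k,l}$, whose spatial cutoffs are rescaled by $2^{j-k}$ so that the resulting fiber $l_{\infty}^{-1/2}L_{t,x}^2\l_{1}^{1/2}\cap L_t^\infty L_x^2\l_{1}^{0}\l_{1}^{0}$ is independent of $k$ and the cited theorems apply verbatim. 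Relatedly, the uniform $X_k$-boundedness of $\tilde S_k$, which you correctly identify as the crux of your retract bounds, is only argued heuristically; it is plausible, but it is an estimate your route cannot do without and which the paper's construction does not need in this form.
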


Now, based on \cref{E3.4} with $L_t^\infty$ norm on both sides, 
and interpolation with \cref{E3.3},
we get \cref{E1.5}. 
As for \cref{E1.6},  we need a similar trace estimate in $X^s$ space. Firstly we have a weighted trace estimate.

\begin{lemma}\label{L3.4}
Let $n\geq 2$, $o\in[2,\infty]$, 
$\alpha\in(1/2-1/o,1)$ and $\beta\in(\alpha-n/2,n/2)$. 
Then we have
\begin{Eq}\label{E3.7}
\|r^{\frac{n}{2}-\frac{n}{o}-\alpha+\beta}u\|_{\L_r^oL_\omega^2}\lesssim\|r^\beta D^\alpha u\|_{L_x^2}
\end{Eq}
\end{lemma}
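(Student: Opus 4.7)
The plan is to reduce Lemma~\ref{L3.4} to the unweighted trace estimate Lemma~\ref{L3.2} (applied at $s=\alpha$) by a dyadic decomposition in the radial variable, and to absorb the weight $r^\beta$ through a Stein-Weiss type $L^2$ Hardy inequality on $\R^n$.

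First, I write $u=\sum_{k\in\Z}\phi_k u$ so that on each shell $|x|\sim 2^k$ the weight $r^\beta$ is comparable to $2^{k\beta}$. Since $\alpha\in(1/2-1/o,1)\subset(1/2-1/o,n/2)$ for $n\geq 2$, and $\alpha>1/2$ whenever $o=\infty$, the hypothesis of Lemma~\ref{L3.2} is satisfied; applying it to $\phi_k u$ yields
\[
\|r^{n/2-n/o-\alpha+\beta}\phi_k u\|_{\L_r^o L_\omega^2}\approx 2^{k\beta}\|r^{n/2-n/o-\alpha}\phi_k u\|_{\L_r^o L_\omega^2}\lesssim 2^{k\beta}\|D^\alpha(\phi_k u)\|_{L^2_x}.
\]
Summing over $k$ via the almost-orthogonality of the radial pieces and the embedding $\ell^2\hookrightarrow\ell^o$ for $o\geq 2$, the left-hand side of Lemma~\ref{L3.4} is dominated by $\bigl(\sum_k 2^{2k\beta}\|D^\alpha(\phi_k u)\|_{L^2}^2\bigr)^{1/2}$.

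Next I split $D^\alpha(\phi_k u)=\phi_k D^\alpha u+[D^\alpha,\phi_k]u$. The first piece sums exactly to $\|r^\beta D^\alpha u\|_{L^2}^2$. For the commutator, since $\alpha\in(0,1)$ and $\phi_k$ is smooth at scale $2^k$, a kernel decomposition that separates the near-diagonal part (where $\phi_k$ is Lipschitz with constant $2^{-k}$) from the off-diagonal tail gives $\|[D^\alpha,\phi_k]u\|_{L^2}\lesssim 2^{-k\alpha}\|\tilde\phi_k u\|_{L^2}$ for a mildly fattened cutoff $\tilde\phi_k$, so after summation the commutator contribution is bounded by $\|r^{\beta-\alpha}u\|_{L^2}^2$. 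Finally, the classical $p=q=2$ Stein-Weiss inequality
\[
\|r^{\beta-\alpha}u\|_{L^2}\lesssim \|r^\beta D^\alpha u\|_{L^2},\qquad \beta\in(\alpha-n/2,n/2),
\]
(applied via $u=I_\alpha D^\alpha u$ with weight exponents $\gamma_1=\alpha-\beta$ and $\gamma_2=\beta$, both strictly below $n/2$) closes the estimate.

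The main obstacle is the commutator step at fractional order $\alpha\in(0,1)$: although the scaling $\lesssim 2^{-k\alpha}$ is heuristically clear, making it rigorous requires a careful off-diagonal kernel bound for the Riesz fractional Laplacian together with uniform-in-$k$ control of the interaction between adjacent shells, and one must verify that the summed error still sits in $\|r^{\beta-\alpha}u\|_{L^2}$. Particular care is also needed near the endpoints $\beta=\alpha-n/2$ and $\beta=n/2$, where the Stein-Weiss constant degenerates; the strict openness of the interval for $\beta$ in the hypothesis is precisely what allows one to invoke the inequality safely. An equivalent alternative, when the commutator bookkeeping becomes cumbersome, is to decompose $u$ in spherical harmonics and pass to the Mellin variable in $r$ on each angular mode, on which both $D^\alpha$ and $r^\beta$ become explicit multiplication operators and the required bound reduces to a uniform weighted multiplier estimate on a line.
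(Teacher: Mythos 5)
Your strategy---dyadic decomposition in $r$, the unweighted trace estimate of \Le{L3.2} at $s=\alpha$ on each shell, and Stein--Weiss to absorb the error---is genuinely different from the paper's proof, which instead establishes the two weighted endpoints $o=\infty$ (the weighted trace estimate of \cite[Lemma 4.2]{MR3691393}) and $o=2$ (Stein--Weiss) and then complex-interpolates, using the square-function equivalence \cref{E3.10} to handle the weights. Your route is viable and more self-contained, but the commutator step, as written, asserts something false, and it is exactly the point where the hypotheses on $\beta$ must enter.

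Concretely, the bound $\|[D^\alpha,\phi_k]u\|_{L^2}\les 2^{-k\alpha}\|\tilde\phi_k u\|_{L^2}$ cannot hold with a localized right-hand side: for $\alpha\in(0,1)$ one has $[D^\alpha,\phi_k]u(x)=c\int(\phi_k(x)-\phi_k(y))|x-y|^{-n-\alpha}u(y)\d y$, so mass of $u$ on a distant shell $|y|\sim 2^m$, $|m-k|\gg 1$, produces a nonzero output on $\supp\phi_k$. The real difficulty is thus not the ``interaction between adjacent shells'' (near-diagonal terms follow from the scaling-invariant operator bound $\|[D^\alpha,a(\cdot/2^k)]\|_{L^2\to L^2}\les 2^{-k\alpha}$, valid because $[D^\alpha,a]$ has order $\alpha-1<0$) but the far-field tails. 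What is true is $\|[D^\alpha,\phi_k^2](\phi_m^2u)\|_{L^2}\les 2^{kn/2}2^{mn/2}\max(2^k,2^m)^{-n-\alpha}\|\phi_m^2 u\|_{L^2}$ for $|m-k|\geq 3$, and one must then run a weighted Schur test: after normalizing by $2^{k\beta}$ on the left and $2^{m(\beta-\alpha)}$ on the right, the off-diagonal entries are $2^{(k-m)(\beta+n/2)}$ for $m>k$ and $2^{(k-m)(\beta-\alpha-n/2)}$ for $m<k$, which are geometrically summable precisely because $\beta>\alpha-n/2>-n/2$ and $\beta<n/2$ (with $\alpha>0$). This computation is absent from your argument; without it the claim that the summed error sits in $\|r^{\beta-\alpha}u\|_{L^2}^2$ is unsupported, and the strict inequalities on $\beta$ play no role before the final Stein--Weiss step. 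Once this Schur test is supplied (it is essentially the content of \cref{E3.10}, which the paper imports from \cite{MR3691393}), the remainder of your argument---the $\ell^2\hookrightarrow\ell^o$ summation, the avoidance of the Besov endpoint since $\alpha>1/2$ when $o=\infty$, and the $p=q=2$ Stein--Weiss inequality with exponents $\alpha-\beta$ and $\beta$ both below $n/2$---is correct.
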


Then combining \Le{L3.4} with Proposition \ref{L3.1} we obtain the trace estimate in $X^s$ space.

\begin{lemma}\label{L3.5}
Let $n\geq 2$, 
for $2\leq o\leq \infty$ and $1/2-1/o< s<(n-1)/2$, 
we have
\begin{Eq}\label{E3.8}
\|r^{\frac{n-1}{2}-\frac{n}{o}-s}u\|_{L_t^2\L_r^oL_\omega^2}\lesssim\|u\|_{X^s}.
\end{Eq}
\end{lemma}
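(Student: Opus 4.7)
The plan is to combine the weighted spatial trace estimate \Le{L3.4} with the $X^s$ bound \cref{E3.2}, using a fractional derivative to transfer regularity. First I would fix any $\alpha\in(1/2-1/o,\min\{1,s\})$, which is a non-empty interval thanks to the hypothesis $1/2-1/o<s$, and set $\beta\dyw\alpha-1/2-s$. A quick check shows $\beta\in(\alpha-n/2,n/2)$: the lower bound is equivalent to $s<(n-1)/2$, and the upper bound follows from $\alpha<1\leq (n+1)/2+s$ for $n\geq 2$. So the parameters are admissible for \Le{L3.4}, and by construction $0<s-\alpha<(n-1)/2$.

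With these choices, a direct computation gives
\begin{Eq*}
\tfrac{n}{2}-\tfrac{n}{o}-\alpha+\beta = \tfrac{n-1}{2}-\tfrac{n}{o}-s,\qquad -\tfrac{1}{2}-(s-\alpha)=\beta.
\end{Eq*}
Applying \Le{L3.4} pointwise in $t$ and then taking the $L_t^2$ norm, I obtain
\begin{Eq*}
\|r^{\frac{n-1}{2}-\frac{n}{o}-s}u\|_{L_t^2\L_r^oL_\omega^2}\lesssim\|r^\beta D^\alpha u\|_{L_t^2L_x^2}.
\end{Eq*}
Since Littlewood--Paley projections satisfy $S_k D^\alpha\approx 2^{k\alpha}S_k$, the definition \cref{E2.1} of $X^s$ immediately yields $\|D^\alpha u\|_{X^{s-\alpha}}\lesssim\|u\|_{X^s}$. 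Then, because $0<s-\alpha<(n-1)/2$, \cref{E3.2} applied to $D^\alpha u$ at regularity level $s-\alpha$ gives
\begin{Eq*}
\|r^\beta D^\alpha u\|_{L_t^2L_x^2}=\|r^{-\frac{1}{2}-(s-\alpha)}D^\alpha u\|_{L_t^2L_x^2}\lesssim\|D^\alpha u\|_{X^{s-\alpha}}\lesssim\|u\|_{X^s},
\end{Eq*}
which closes the argument.

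The only subtle point is the joint admissibility of $\alpha$: \Le{L3.4} demands $\alpha>1/2-1/o$ so the spatial trace costs ``regularity'' $\alpha$, while \cref{E3.2} requires the leftover regularity $s-\alpha$ to be strictly positive and below $(n-1)/2$. These two constraints carve out a non-empty window for $\alpha$ exactly when $1/2-1/o<s<(n-1)/2$, matching the hypothesis; outside this open range one would need endpoint versions of one of the two ingredients. Once this window is secured the proof is essentially a parameter-matching exercise and no further ideas are needed.
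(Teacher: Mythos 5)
Your proposal is correct and follows essentially the same route as the paper: the paper also applies \Le{L3.4} with $\alpha=\tfrac12-\tfrac1o+\delta$ for small $\delta>0$ (a particular point in your admissible window for $\alpha$) and the matching $\beta=\alpha-\tfrac12-s$, then concludes with \cref{E3.2} applied to $D^\alpha u$ at regularity $s-\alpha$. The parameter checks and the commutation step $\|D^\alpha u\|_{X^{s-\alpha}}\lesssim\|u\|_{X^s}$ are exactly as in the paper's argument.
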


Finally, 
interpolating \cref{E3.4} and \cref{E3.8} we get \cref{E1.6} which finishes the sketch. 
In the following subsections, we give the detailed proof for each part. 

\subsection{Proof of \Le{L3.2}}\label{sec-3.1}~


Firstly, 
when $o=\infty$, 
\cref{E3.4} is the well known trace estimate, 
see, 
e.g.,
\cite{MR2769870} and references therein.
When $o=2$, it is the \emph{Hardy}'s inequality and so we have
\begin{Eq}\label{E3.9}
\begin{cases}
\|r^{\frac{n}{2}-s_0}u\|_{\L_r^\infty L_\omega^2}\lesssim\|u\|_{\B_{2,1}^{s_0}},\ s_0\in [1/2, n/2)\ ,\\
\|r^{-s_1}u\|_{\L_r^2 L_\omega^2}\lesssim\|u\|_{\H_x^{s_1}},\  s_1\in [0, n/2).
\end{cases}
\end{Eq}

The estimate for $s=1/2-1/o$ is obtained in
\cite[Theorem 2.10]{MR1386767}, see also
\cite[Proposition 2.2]{MR3552253} for an alternative proof based on interpolation and trace estimate.
 In what follows, we give a unified proof for the case of $o\in (2,\infty)$, in spirit of \cite{MR3552253}.

Let $2< o<\infty$ and $1/2-1/o\leq s<n/2$,
we set
$\theta=2/o\in (0,1)$,
 $s_0=(n - os + nos)/(no - o + 2)$ and $s_1=(n-no/2+nos)/(no - o + 2)$.
 Then a routine calculation shows that
 $s_0>s_1$,
 $s_0\in [1/2, n/2)$, $s_1\in [0, n/2)$,
and \begin{Eq*}
\frac{1-\theta}{\infty}+\frac{\theta}{2}=\frac{1}{o},\qquad (1-\theta)s_0+\theta s_1=s.
\end{Eq*}
With help of these parameters,
 we shall use the real interpolation with parameters $(\theta, o)$ to give the proof,
for which we record the following facts:
\begin{itemize}
\item \cite[Theorem 3.7.1]{MR0482275}: $(A_0,A_1)_{\theta,q}'=(A_0',A_1')_{\theta,q'}$ for $1\leq q<\infty$, $A_0\cap A_1$ is dense in $A_0$ and $A_1$;
\item \cite[Theorem 5.5.1]{MR0482275}: $(L_{(\rho_0\d x)}^{p_0}(A),L_{(\rho_1\d x)}^{p_1}(A))_{\theta,p}=L_{(\rho\d x)}^{p}(A)$ for $1\leq p_0,p_1<\infty$, $1/p=(1-\theta)/p_0+\theta/p_1$ and $\rho=\rho_0^{p(1-\theta)/p_0}\rho_1^{p\theta/p_1}$;
\item \cite[Theorem 6.4.5]{MR0482275}: $(\B_{p,q_0}^{s_0},\B_{p,q_1}^{s_1})_{\theta,q}=\B_{p,q}^{s}$ for $s_0\neq s_1$, $s=(1-\theta)s_0+\theta s_1$ and $1\leq p,q_0,q_1,q\leq \infty$.
\end{itemize}

More specifically,
defining $\|v\|_{A_0}\dyw\|r^{s_0-n/2}v\|_{\L_r^1L_\omega^2}$ and $\|v\|_{A_1}\dyw\|r^{s_1}v\|_{\L_r^2L_\omega^2}$,
by \cref{E3.9} and the first fact,
we conclude
\begin{Eq*}
\|u\|_{(A_0,A_1)_{\theta,o'}'}=\|u\|_{(A_0',A_1')_{\theta,o}}\lesssim\|u\|_{(\B_{2,1}^{s_0},\H_x^{s_1})_{\theta,o}}.
\end{Eq*}
Now,
for the left hand side (LHS),
with $\rho_0\dyw r^{s_0-n/2+n-1}$
and $\rho_1\dyw r^{2s_1+n-1}$,
we set
\begin{Eq*}
\rho\dyw r^{o'(\frac{n}{o}+s-\frac{n}{2})+n-1}=\rho_0^{o'(1-\theta)}\rho_1^{\frac{o'\theta}{2}},
\end{Eq*}
then by the second fact with $p_0=1$,
$p_1=2$,
$p=o'$ and $A=L_\omega^2$,
we have
\begin{Eq*}
\|v\|_{(A_0,A_1)_{\theta,o'}}=\|r^{\frac{n}{o}+s-\frac{n}{2}}v\|_{\L_r^{o'}L_\omega^2}\Rightarrow\|u\|_{(A_0,A_1)_{\theta,o'}'}=\|r^{\frac{n}{2}-\frac{n}{o}-s}u\|_{\L_r^oL_\omega^2}.
\end{Eq*}
For the right hand side (RHS),
noticing that $\H^{s_1}=\B_{2,2}^{s_1}$,
by the third fact we get
\begin{Eq*}
\|u\|_{(\B_{2,1}^{s_0},\H_x^{s_1})_{\theta,o}}=\|u\|_{\B_{2,o}^{s}}\lesssim\|u\|_{\H_x^{s}}
\end{Eq*}
Summing up,
we finish the proof.

\subsection{Proof of \Cl{C3.3}}\label{sec-3.2}~

Similarly to \cite{MR3724252}, 
we define the co-retraction and retraction by
\begin{gather*}
Qf=\k({{\bf1}_{l=k}\phi\k({\frac{\k<{2^kx}>}{2^j}})S_kf})_{j\in\N,k\in\Z,l\in \Z},\\
Q:X_p^s\rightarrow \l_{p(k)}^{s}l_{\infty(j)}^{-\frac{1}{2}}L_{t,x}^2\l_{1(l)}^{\frac{1}{2}}\qquad Q:W_p^s\rightarrow \l_{p(k)}^{s}L_t^\infty L_x^2\l_{1(j)}^{0}\l_{1(l)}^{0},\\
R(a_{jkl})_{j\in\N,k\in\Z,l\in \Z}=\sum_{k\in Z}S_k\sum_{j\geq 0}\phi\k({\frac{\k<{2^kx}>}{2^j}})a_{jkk},\\
R:\l_{p(k)}^{s}l_{\infty(j)}^{-\frac{1}{2}}L_{t,x}^2\l_{1(l)}^{\frac{1}{2}}\rightarrow X_p^s\qquad R:\l_{p(k)}^{s}L_t^\infty L_x^2\l_{1(j)}^{0}\l_{1(l)}^{0}\rightarrow W_p^s,
\end{gather*}
then we obtain $RQf=f$ for $f\in W_p^s\cap X_p^s$. 
Set $A=l_{\infty}^{-\frac{1}{2}}L_{t,x}^2\l_{1}^{\frac{1}{2}}\cap L_t^\infty L_x^2\l_{1}^{0}\l_{1}^{0}$, 
it is obvious that $W_p^s\cap X_p^s$ norm is equivalent to $\l_p^s(A)$ norm. 
By \cite[Theorem 5.6.3]{MR0482275} and \cite[Theorem 5.6.1]{MR0482275}, we have
$$
[\l_{2}^{s_0}(A),\l_{2}^{s_1}(A)]_\theta=\l_{2}^s (A),\ 
(\l_{p_0}^{s_0}(A),\l_{p_1}^{s_1}(A))_{\theta,p}=\l_{p}^s (A),\qquad 1\leq p_0,p_1,p\leq\infty,
$$
provided that $s_0\neq s_1$, $0<\theta<1$ and $s=(1-\theta)s_0+\theta s_1$. This completes the proof, in view of \cite[Theorem 6.4.2]{MR0482275}.

\subsection{Proof of \eqref{E1.5}}~

When $q=2$, we have $o=2$, $s=0$, and then
\cref{E1.5} follows directly from \cref{E3.3}. 
It remains to consider $2<q<\infty$, for which we
set $\alpha_0={q\alpha}/{2}$, $o_1={ o( q-2)}/{ (q- o)}$, $s_1={ q s}/{ (q-2)}$. 
Similarly 
to the proof in subsection \ref{sec-3.1},
we have $\alpha_0\leq 1/2$, 
$2\leq o_1\leq \infty$ and $1/2-1/o_1\leq s_1<n/2$, by 
 the conditions on $q$, $o$, $\alpha$ and $s$. 
Moreover, 
for $\theta=1-2/q\in (0,1)$, we have
\begin{Eq*}
\frac{1-\theta}{2}+\frac{\theta}{\infty}=\frac{1}{q},\qquad \frac{1-\theta}{2}+\frac{\theta}{o_1}=\frac{1}{o},\qquad (1-\theta)0+\theta s_1=s.
\end{Eq*}

Firstly when $q=o$ and $s=1/2-1/o$, 
where $o_1=\infty$ and $s_1=1/2-1/o_1$, 
by \cref{E3.3} and \cref{E3.4} we obtain 
\begin{Eq*}
\begin{cases}
A_{\alpha_0}(T)\|r^{-\frac{1}{2}+\alpha_0}\ppsi_1 u\|_{L_t^2\L_r^2 L_\omega^2}\lesssim\|u\|_{W^0\cap X^0},\\
\|r^{\frac{n-1}{2}}\ppsi_1 u\|_{L_t^\infty\L_r^{\infty}L_\omega^2}\lesssim\|u\|_{L_t^\infty\dot B_{2,1}^{\frac{1}{2}}}\lesssim\|u\|_{W_1^{\frac{1}{2}}\cap X_1^{\frac{1}{2}}}.\\
\end{cases}
\end{Eq*}
Recall the following facts
\begin{itemize}
\item \cite[Theorem 3.7.1]{MR0482275}: $(A_0,A_1)_{\theta,q}'=(A_0',A_1')_{\theta,q'}$ for $1\leq q<\infty$, $A_0\cap A_1$ is dense in $A_0$ and $A_1$;
\item \cite[1.18.4 eq.(3)]{MR503903}: $(L^{p_0}(A_0),L^{p_1}(A_1))_{\theta,p}=L^p((A_0,A_1)_{\theta,p})$ for $1\leq p_0,p_1<\infty$,  and $1/p=(1-\theta)/p_0+\theta/p_1$;
\item \cite[Theorem 5.5.1]{MR0482275}: $(L_{(\rho_0\d x)}^{p_0}(A),L_{(\rho_1\d x)}^{p_1}(A))_{\theta,p}=L_{(\rho\d x)}^{p}(A)$ for $1\leq p_0,p_1<\infty$, $1/p=(1-\theta)/p_0+\theta/p_1$ and $\rho=\rho_0^{p(1-\theta)/p_0}\rho_1^{p\theta/p_1}$,
\end{itemize}
we could use real interpolation with $(1-{2}/{q},q)$ for LHS to obtain
$$\textrm{LHS of }\cref{E1.5}\lesssim 
\|u\|_{(W^0\cap X^0, W_1^{\frac{1}{2}}\cap X_1^{\frac{1}{2}})_{\theta, q}}\ .$$
Applying \cref{E3.6} for RHS,
 we conclude
\begin{Eq*}
\textrm{LHS of }\cref{E1.5}\lesssim\|u\|_{W_q^{\frac{1}{2}-\frac{1}{q}}\cap X_q^{\frac{1}{2}-\frac{1}{q}}}\lesssim\|u\|_{W_2^{\frac{1}{2}-\frac{1}{q}}\cap X_2^{\frac{1}{2}-\frac{1}{q}}}= \textrm{RHS of }\cref{E1.5}, 
\end{Eq*}
where we have used the fact $l^2\subset l^q$ in the last inequality.

For the remaining case when $q>o$ or $s>1/2-1/o$,
we have $o_1<\infty$ or $1/2-1/o_1<s_1$. By \cref{E3.3} and \cref{E3.4}, we obtain
\begin{Eq*}
\begin{cases}
A_{\alpha_0}(T)\|r^{-\frac{1}{2}+\alpha_0}\ppsi_1 u\|_{L_t^2\L_r^2 L_\omega^2}\lesssim\|u\|_{W^0\cap X^0},\\
\|r^{\frac{n}{2}-\frac{n}{o_1}-s_1}\ppsi_1 u\|_{L_t^\infty\L_r^{o_1}L_\omega^2}\lesssim\|u\|_{W^{s_1}\cap X^{s_1}}.\\
\end{cases}
\end{Eq*}
To apply complex interpolation with $\theta=1-{2}/{q}$, 
 we use \cref{E3.5}  for RHS, and 
 record the following facts
 for LHS:
\begin{itemize}
\item \cite[Corollary 4.5.2]{MR0482275}: $[A_0,A_1]_{\theta}'=[A_0',A_1']_{\theta}$ for $A_0\cap A_1$ is dense in $A_0$ and $A_1$ and at least one of the spaces $A_0$ and $A_1$ is reflexive;
\item \cite[Theorem 5.1.2]{MR0482275}: $[L^{p_0}(A_0),L^{p_1}(A_1)]_{\theta}=L^p([A_0,A_1]_{\theta})$ for $1\leq p_0,p_1<\infty$,  and $1/p=(1-\theta)/p_0+\theta/p_1$;
\item \cite[Theorem 5.5.3]{MR0482275}: $[L_{(\rho_0\d x)}^{p_0}(A),L_{(\rho_1\d x)}^{p_1}(A)]_{\theta}=L_{(\rho\d x)}^{p}(A)$ for $1\leq p_0,p_1<\infty$, $1/p=(1-\theta)/p_0+\theta/p_1$ and $\rho=\rho_0^{p(1-\theta)/p_0}\rho_1^{p\theta/p_1}$.
\end{itemize}
This gives us \cref{E1.5} and completes the proof.

\subsection{Proof of \Le{L3.4}}~

For given $n\ge 2$, $o\in[2,\infty]$, 
$\alpha\in(1/2-1/o,1)$ and $\beta\in(\alpha-n/2,n/2)$,
we set 
$$\alpha_0=\frac{\alpha o + 2}{o + 2}\in(\frac{1}{2},1)\ ,\ 
\alpha_1=\frac{2\alpha o-o+2}{o + 2}\in(0,1)\ ,$$ 
$$\beta_0=\frac n2 -\frac{n-\alpha_0 }{n-\alpha }(\frac n2-\beta )\in(-\frac{n}{2}+\alpha_0,\frac{n}{2})\ ,$$ $$\beta_1=\frac n 2 -\frac{n-\alpha_1}{n-\alpha}(\frac n2-\beta )
\in(-\frac{n}{2}+\alpha_1,\frac{n}{2})
\ .$$
Let $\theta=2/o$, we have
$(1-\theta)\alpha_0+\theta\alpha_1=\alpha$, $(1-\theta)\beta_0+\theta\beta_1=\beta$.
Recall that by \cite[(4.10)]{MR3691393}, we have
\begin{Eq}\label{E3.10}
\|r^{\beta} D^{\alpha} u\|_{L_x^2}\approx \|r^{\beta} 2^{\alpha k} S_k u\|_{l_{k\in\Z}^2L_x^2}\approx \| 2^{\beta j+\alpha k} \phi_jS_k u\|_{l_{j\in\Z}^2l_{k\in\Z}^2L_x^2},
\end{Eq}
for any $|\beta|<n/2$.
Then by weighted trace estimate of \cite[Lemma 4.2]{MR3691393} and  weighted \emph{Hardy-Littlewood-Sobolev} estimates of {Stein-Weiss}, 
we obtain
\begin{Eq*}
\begin{cases}
\|r^{\frac{n}{2}-\alpha_0+\beta_0}u\|_{\L_r^\infty L_\omega^2}
\lesssim \|r^{\beta_{0}}D^{\al_{0}}u\|_{L^{2}}
\les  \| 2^{\beta_{0} j+\alpha_{0} k} \phi_jS_k u\|_{l_{j\in\Z}^2l_{k\in\Z}^2L_x^2}
\ ,\\
\|r^{-\alpha_1+\beta_1}u\|_{\L_r^2L_\omega^2}\lesssim\|r^{\be_{1}}D^{\al_{1}}   u\|_{L^2}
\les  \| 2^{\beta_{1} j+\alpha_{1} k} \phi_jS_k u\|_{l_{j\in\Z}^2l_{k\in\Z}^2L_x^2}
\ ,
\end{cases}
\end{Eq*}
where
$D=\sqrt{-\Delta}$.
Here we should mention that
the first inequality was stated for $n\ge 3$ and $\al_{0}\in (1/2, 1]$ in
\cite[Lemma 4.2]{MR3691393}. However, we observe that the same proof
also apply in the situation of
$n=2$ and  $\alpha_0\in (1/2, 1)$.

We remark that we can do complex interpolation for RHS,
if we follow a similar proof as in subsection \ref{sec-3.2}.
For LHS, we record the following facts:
\begin{itemize}
\item \cite[Corollary 4.5.2]{MR0482275}: $[A_0,A_1]_{\theta}'=[A_0',A_1']_{\theta}$ for $A_0\cap A_1$ is dense in $A_0$ and $A_1$ and at least one of the spaces $A_0$ and $A_1$ is reflexive;
\item \cite[Theorem 5.5.3]{MR0482275}: $[L_{(\rho_0\d x)}^{p_0}(A),L_{(\rho_1\d x)}^{p_1}(A)]_{\theta}=L_{(\rho\d x)}^{p}(A)$ for $1\leq p_0,p_1<\infty$, $1/p=(1-\theta)/p_0+\theta/p_1$ and $\rho=\rho_0^{p(1-\theta)/p_0}\rho_1^{p\theta/p_1}$.
\end{itemize}
Thus, by complex interpolation with $\theta={2}/{o}$, together with
 \cref{E3.10},
 we get
\begin{Eq*}
\|r^{\frac{n}{2}-\frac{n}{o}-\alpha+\beta}u\|_{\L_r^o  L_\omega^2}\lesssim\| 2^{\beta j+\alpha k} \phi_jS_k u\|_{l_{j\in\Z}^2l_{k\in\Z}^2L_x^2}\les
\| r^{\be}D^{\alpha } u\|_{L_x^2}\ .
\end{Eq*}
This completes the proof.

\subsection{Proof of \Le{L3.5}}~

We follow  the similar proof of \cite[Lemma 5.5]{MR3691393}. Take $\delta>0$ and small enough such that $s>1/2-1/o+\de$, 
then we conclude
\begin{Eq*}
\|r^{\frac{n-1}{2}-\frac{n}{o}-s}u\|_{L_t^2\L_r^oL_\omega^2}\lesssim\|r^{-s-\frac{1}{o}+\delta}D^{\frac{1}{2}-\frac{1}{o}+\delta}u\|_{L_t^2L_x^2}\lesssim\|D^{\frac{1}{2}-\frac{1}{o}+\delta}u\|_{X^{s-\frac{1}{2}+\frac{1}{o}-\delta}}\lesssim\|u\|_{X^s},
\end{Eq*}
where we have used \cref{E3.7} in the first inequality and \cref{E3.2} in the second inequality.

\subsection{Proof of \eqref{E1.6}}~

The proof of \cref{E1.6} is very similar to the proof of \cref{E1.5}. 
Let $o_0=o_1=o$ and $\theta=2/q$, 
we set $$s_0=s-\frac{2os - o + 2}{2o - 2q + oq - noq}
\in (\frac{1}{2}-\frac{1}{o_0}, \frac{n}{2})\ ,$$
$$s_1=s+\frac{2os - o + 2}{4o - 4q + 2oq - 2noq}(q - 2)
\in (\frac{1}{2}-\frac{1}{o_1} , \frac{n-1}{2})\ ,
$$ such that
$s_{0}(1-\theta)+s_{1}\theta=s$.
By \cref{E3.4} and \cref{E3.8} we obtain
\begin{Eq*}
\begin{cases}
\|r^{\frac{n}{2}-\frac{n}{o_0}-s_0}u\|_{L_t^\infty\L_r^{o_0}L_\omega^2}\lesssim\|u\|_{W^{s_0}\cap X^{s_0}},\\
\|r^{\frac{n-1}{2}-\frac{n}{o_1}-{s_1}}u\|_{L_t^2\L_r^{o_1}L_\omega^2}\lesssim\|u\|_{W^{s_1}\cap X^{s_1}}.
\end{cases}
\end{Eq*}
Using complex interpolation with $\theta=2/q$, 
we get \cref{E1.6}.

%
%
%
%

\section{Restatement and proof of \Th{T1.2}}\label{S4}

In this section, 
we will establish  our main result for the nonlinear problem,
 \Th{T1.2}.

\subsection{Restatement and a Corollary}~

Firstly we give the restatement of \Th{T1.2}.

\begin{thm}\label{T4.1}
Let $n=3$, $p_1, p_2\ge 2$,  and assume \eqref{H1} and \eqref{H2}. 
Fix $\delta>0$, 
set
\begin{Eq}\label{E4.1}
s_\iota\dyw\frac{3}{2}+\sigma-\frac{2(p_\iota + 1)}{p_1p_2 - 1},
\end{Eq}
where $\sigma$ is defined in \cref{E1.3}. 
Then, there exists $R,c,\varepsilon_0>0$ such that for any $(f_\iota,g_\iota)$ satisfying
\begin{Eq}\label{E4.2}
\sum_\iota \|Y^{\leq 2}(f_\iota,g_\iota)\|_{H_x^1\times L_x^2}+\|\ppsi_R Y^{\leq 2}g_\iota\|_{\H_x^{s_\iota-1}} =\varepsilon\leq\varepsilon_0\ ,
\end{Eq}
with additional condition $\supp(f_\iota,g_\iota)\subset B_R$ when $p_{\hiota}=2$, 
there is a unique solution $u_\iota$ of \cref{E1.2} in $M\cap([0,T_\varepsilon)\times\R^3)$ where $T_\varepsilon$ is defined in \cref{E1.7}, 
such that $\|\ppsi_R Z^{\leq 2}u_\iota \|_{W^s\cap X^s}+\|Z^{\leq 2}u_\iota\|_{L\!E_\g}\lesssim\varepsilon$.  
At last, 
if \eqref{H3} is satisfied, 
 the initial data do not need to be compactly supported.
\end{thm}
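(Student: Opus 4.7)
The plan is a Picard iteration in the norm suggested by the statement,
\[
N_\iota(u;T) := \|Z^{\leq 2} u\|_{L\!E_\g} + \|\ppsi_R Z^{\leq 2} u\|_{W^{s_\iota}\cap X^{s_\iota}},
\]
with $s_\iota$ as in \cref{E4.1}. Starting from $(u_1,u_2)$, define $(U_1,U_2)$ by $\square_\g U_\iota = F_{p_\iota}(u_{\hiota})$ with the prescribed Cauchy data, and seek a fixed point in the ball $\{N_\iota(u_\iota;T_\varepsilon)\leq C\varepsilon\}$. The difference estimate for contraction follows the same template as the a priori bound using $|F'_p(u)|\lesssim|u|^{p-1}$, so I focus on the latter.

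For the linear step, the $L\!E_\g$ component comes directly from \eqref{H2} applied to $Z^{\leq 2} U_\iota$, after controlling the commutators $[\square_\g,Z]$ via \eqref{H1} and a Gr\"onwall argument of the type used in \cite{MR3691393}. For the exterior weighted component, I multiply by $\ppsi_R$ and use that $[\square_\g,\ppsi_R]$ is compactly supported and hence dominated by $L\!E_\g$; the remaining piece satisfies a near-Minkowski wave equation in $r\gtrsim R$, for which the $W^{s_\iota}\cap X^{s_\iota}$ bound reduces to classical energy and local-smoothing estimates. The outcome is a source estimate of the schematic form
\[
N_\iota(U_\iota;T)\lesssim \varepsilon + \|Z^{\leq 2} F_{p_\iota}(u_{\hiota})\|_{L_t^1 H_x^{s_\iota}}.
\]

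The crux is the nonlinear bound
\[
\|Z^{\leq 2}F_{p_\iota}(u_{\hiota})\|_{L_t^1 H_x^{s_\iota}} \lesssim A_\iota(T)\, N_{\hiota}(u_{\hiota};T)^{p_\iota},
\]
where $A_\iota(T)$ is a power or logarithm of $T$ inherited from the factor $A_\alpha(T)$ in \Th{T1.1}. I would split into $|x|\lesssim 1$ and $|x|\gtrsim 1$: near the obstacle, the local-energy piece of $N_{\hiota}$ combined with angular Sobolev via the $\Omega$ components of $Z$ gives pointwise control of $Z^{\leq 1}u_{\hiota}$, and H\"older in time closes the estimate. In the exterior, I apply \Th{T1.1} together with \Le{L4.5} to $u_{\hiota}$ at regularity $s_{\hiota}$ with admissible exponents $(q_\iota,o_\iota,\alpha_\iota)$, then raise to the $p_\iota$-th power and H\"older in $(t,r,\omega)$. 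The demand that the product lands in $L_t^1 H_x^{s_\iota}$ dictates the exponents, and unwinding the resulting scaling and integrability relations reproduces exactly $\sigma$ in \cref{E1.3} and $s_\iota$ in \cref{E4.1}. The time loss $A_\iota(T)$ then balances against $\varepsilon^{p_\iota}$ in both equations of the coupled system to produce $T_\varepsilon\approx\varepsilon^{-1/\sigma}$ when $\sigma>0$; it becomes a logarithmic loss in the critical case $\sigma=0$, giving the exponential lifespans in \cref{E1.7}; and it disappears when $\sigma<0$, giving global existence.

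The main obstacle is the simultaneous choice of the four parameters $(q_\iota,o_\iota,\alpha_\iota,s_\iota)$ lying in the admissible range of \Th{T1.1}, namely $\alpha\le 1/q$ and $1/2-1/o\le s\le(n-\delta)(1/2-1/q)$, while also closing the coupled H\"older estimate between $u_1$ and $u_2$. The endpoint $p_\iota=2$ saturates $s=1/2-1/o$ for one of the factors, creating a logarithmic weight at spatial infinity; compact support of the data removes it automatically, while under \eqref{H3} one can instead use \cref{E1.6} in place of \cref{E1.5} and control $\ppsi_R g_\iota$ globally in space via the stationary symmetry, thereby dispensing with compact support even at $p_\iota=2$. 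The additional $\varepsilon^\delta$ loss in the critical case $p_1=2$, $\sigma=0$ is the price of moving strictly inside the admissible range by $\delta>0$.
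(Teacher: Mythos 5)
Your overall architecture (iteration in a norm combining $L\!E_\g$ with an exterior weighted Strichartz norm, commutator treatment of $[\square_\g,Z]$ and $[\square_\g,\ppsi_R]$, balancing the $A(T)$ loss against $\varepsilon^{p_\iota-1}$) matches the paper, but the crux nonlinear estimate is set up in the wrong dual space, and this is a genuine gap. You propose to close the iteration with $\|Z^{\leq 2}F_{p_\iota}(u_\hiota)\|_{L_t^1 H_x^{s_\iota}}\lesssim A_\iota(T)N_\hiota^{p_\iota}$, i.e.\ an $L_t^1$-in-time Duhamel for the weighted Strichartz component. Taking the $p_\iota$-th root, this forces you to control $u_\hiota$ in a space with time exponent $q=p_\iota$ and radial exponent $o=p_\iota\rho$, where $\rho\ge 1$ is the Lebesgue exponent coming from the dual trace embedding $L^\rho\subset \H_x^{s_\iota-1}$ (note $s_\iota-1<0$, so $L_x^2$ does not embed and some such duality is unavoidable). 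But then the admissibility constraint $1/2-1/o\leq s$ in \Th{T1.1} reads $1/2-1/(p_\iota\rho)\leq s_\hiota$, which already fails in the scalar case $p_1=p_2=p$, $s=1/2-1/p$: it forces $\rho\le 1$, i.e.\ the unattainable endpoint $o=\infty$ of the trace estimate \cref{E3.4}. This constraint is the trace theorem, not a technical artifact, so no choice of $(q,o,\alpha)$ rescues the $L_t^1$-based scheme in the range $p\in[2,1+\sqrt2]$. The paper avoids this precisely by measuring the source in the \emph{dual of the inhomogeneous estimate} \cref{E1.6} with $q=2$: the norm $V_\iota^m$ in \cref{E4.9} is $\|\ppsi_R^{p_\iota}r^{3-s_\iota-3p_\iota/o_\hiota}Z^{\leq m}F\|_{L_t^2\L_r^{o_\hiota/p_\iota}L_\omega^2}+\|Z^{\leq m}F\|_{L_t^1L_x^2}$, fed into \Le{L4.5} whose source space $(W^{1-s}\cap X^{1-s})'$ is strictly larger than $L_t^1\H^{s-1}$. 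With the source in $L_t^2$, the solution lives in $L_t^{2p_\hiota}\L_r^{o_\iota}$ with $o_\iota=2p_\hiota-p_1$, which exactly saturates $1/2-1/o_\iota\le s_\iota$ and is why the exponents \cref{E4.7} look the way they do. Your scheme would therefore not reproduce $\sigma$, and for $\sigma<0$ (where no $T$-loss is affordable) it cannot give global existence.

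A secondary inaccuracy: the compact-support requirement at $p_\hiota=2$ is not about a logarithmic weight at spatial infinity. When $p_1=2$ one has $s_2=0$, and \Co{C4.6} (via \Le{L4.5}, which needs $|s-1|<1$ applied to $\partial u$) breaks down for the \emph{homogeneous} part of the evolution at $s=0$; the inhomogeneous estimate survives by a duality argument, so one needs $\ppsi_RZ^{\leq2}u_2$ to have vanishing Cauchy data, which is what compact support of $(f_2,g_2)$ buys. Hypothesis \eqref{H3} is used to recover the homogeneous $s=0$ estimate, not to ``control $\ppsi_Rg_\iota$ globally.'' The paper also has to shift $q_\iota$ to $(2-\delta)p_\hiota$ there because $q=2$, $s=1$ leaves the admissible range of \cref{E1.6}, which is the actual source of the $\varepsilon^{-2+\delta}$ exponent in \cref{E1.7}.
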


When 
$F_{p_1}=F_{p_2}$ and $(f_{1}, g_{1})=(f_{2}, g_{2})$,
we can obtain the following corollary, for $\square_\g u=F_p(u)$.

\begin{coro}\label{C4.2}
Let $n=3$, 
$p\ge 2$ and assume \eqref{H1} and \eqref{H2}. 
Set
\begin{Eq*}
s\dyw\frac{1}{2}-\frac{1}{p},
\end{Eq*}
then, 
there exists $R,c,\varepsilon_0>0$ such that for any $(f,g)$ satisfying
\begin{Eq*}
\|Y^{\leq 2}(f,g)\|_{H_x^1\times L_x^2}+\|\ppsi_R Y^{\leq 2}g\|_{\H_x^{s-1}} =\varepsilon\leq\varepsilon_0
\end{Eq*}
and $\supp(f,g)\subset B_R$ if $p=2$, 
there is a unique solution $u$ of
\begin{Eq*}
\begin{cases}
\square_\g u=F_p(u),~(t,x)\in M\ ,\\
(u,\partial_t u)|_{t=0}=(f,g)\ ,\\
\end{cases}
\end{Eq*}
in $M\cap([0,T_\varepsilon')\times\R^3)$, 
such that $u$ satisfies $\|\ppsi_R Z^{\leq 2}u \|_{W^s\cap X^s}+\|Z^{\leq 2}u\|_{L\!E_\g}\lesssim\varepsilon$. 
Here $T_\varepsilon'$ is defined by
\begin{Eq*}T_\varepsilon'\dyw 
\begin{cases}
c\varepsilon^{-p(p-1)/(1+2p-p^2)}&2\leq p<1+\sqrt2,\\
\exp(c\varepsilon^{-2\sqrt2})&p=1+\sqrt2,\\
\infty&p>1+\sqrt2\ .
\end{cases}
\end{Eq*}
\end{coro}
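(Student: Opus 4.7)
The plan is to obtain \Co{C4.2} as an immediate specialization of \Th{T4.1}. I would set $p_1 = p_2 = p$ and take $(f_1, g_1) = (f_2, g_2) = (f, g)$; \Th{T4.1} then produces a solution pair $(u_1, u_2)$ on $M\cap([0, T_\varepsilon)\times\R^3)$ satisfying the stated norm bound. By the symmetry of the system (it is invariant under the swap $\iota \leftrightarrow \hiota$, now that the two equations and data coincide), $(u_2, u_1)$ is another solution in the same class, so the uniqueness portion of \Th{T4.1} forces $u_1 = u_2 =: u$. Then $\square_\g u = F_p(u_\hiota) = F_p(u)$ shows $u$ solves the scalar problem and satisfies $\|\ppsi_R Z^{\leq 2} u\|_{W^s \cap X^s} + \|Z^{\leq 2} u\|_{L\!E_\g} \lesssim \varepsilon$.

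What remains is arithmetic: verifying that the scalar parameters $s$ and $T_\varepsilon'$ match the specializations of $s_\iota$ and $T_\varepsilon$. Specializing \cref{E1.3} with $p_1 = p_2 = p$ gives
\begin{Eq*}
\sigma = \frac{p + 2 + 1/p}{p^2 - 1} - 1 = \frac{1 + 2p - p^2}{p(p-1)},
\end{Eq*}
so the sign of $\sigma$ coincides with that of $1 + 2p - p^2$, which is positive, zero, or negative precisely when $p$ is smaller than, equal to, or larger than $1 + \sqrt{2}$. Hence $1/\sigma = p(p-1)/(1 + 2p - p^2)$ in the subcritical range, and since $1 + \sqrt{2} > 2$, at criticality $2(p - 1) = 2\sqrt{2}$, reproducing the two finite branches of $T_\varepsilon'$; the supercritical case $\sigma < 0$ yields $T_\varepsilon = \infty$. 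Likewise, plugging $p_\iota = p$ into \cref{E4.1} and simplifying,
\begin{Eq*}
s_\iota = \frac{3}{2} + \sigma - \frac{2}{p - 1} = \frac{3}{2} - \frac{p+1}{p} = \frac{1}{2} - \frac{1}{p},
\end{Eq*}
as stated.

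The side conditions also align: the compact support hypothesis in \Th{T4.1} is triggered precisely when $p_\hiota = 2$ for some $\iota$, which under the scalar reduction becomes $p = 2$; and in that regime $\sigma = 1/2 > 0$, so the $\exp(c\varepsilon^{-2+\delta})$ branch of \Th{T1.2} is never invoked and correctly does not appear in the corollary's three-case split. The only step that is not purely mechanical is the uniqueness collapse $u_1 = u_2$, and I expect this to be the main (though minor) obstacle: it requires that the uniqueness assertion of \Th{T4.1} apply to both members of the symmetric pair, which it does, since both $(u_1, u_2)$ and $(u_2, u_1)$ lie in the same norm class with identical bounds.
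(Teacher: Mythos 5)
Your proposal is correct and is essentially the paper's route: the authors obtain \Co{C4.2} directly by specializing \Th{T4.1} to $F_{p_1}=F_{p_2}$ and $(f_1,g_1)=(f_2,g_2)$, with the identification $u_1=u_2$ following from uniqueness exactly as you argue (or, equivalently, from the symmetry of the iteration scheme). Your arithmetic for $\sigma$, $s$, and the three branches of $T_\varepsilon'$ checks out, including the observation that the $p_1=2$ critical branch of \cref{E1.7} is vacuous in the scalar reduction since $\sigma(2,2)=1/2>0$.
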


\subsection{Preparation}~

In order to facilitate our later proof, 
we give some lemmas which will be used in this section.

\begin{prop}[Lemma 4.3 in \cite{MR3691393}]
Let $n\geq 2$, 
$R\geq 3$ and $k=\lfloor (n+2)/2\rfloor$ be the integral part of $(n+2)/2$. 
We have
\begin{Eq}\label{E4.3}
\|r^{b}u\|_{L^q_{r} L^\infty_\omega(r\ge R+1)}\lesssim  \|r^{b-(n-1)/p+(n-1)/q} Y^{\leq k} u\|_{L^p_{r}L^{2}_\omega(r\geq R)}
\end{Eq}
for any $b\in\R$, 
$2\leq p\leq q\leq \infty$. 
Moreover, 
for any $2\leq p\leq q\leq {q_2}<\infty$, 
$b\in\R$, $s\in \Z$ with
$s\geq n/2-n/{q_2}$, 
we have
\begin{Eq}\label{E4.4}
\|r^{b}u\|_{L^q_{r} L^{q_2}_\omega(r\geq R+1)}\lesssim  \|r^{b-(n-1)/p+(n-1)/q} Y^{\leq s} u\|_{L^p_{r}L^{2}_\omega(r\geq R)}.
\end{Eq}
\end{prop}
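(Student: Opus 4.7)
The plan is to prove this weighted Sobolev estimate by combining a Sobolev embedding on the unit sphere $S^{n-1}$ (which replaces $L^\infty_\omega$, respectively $L^{q_2}_\omega$, by $L^2_\omega$) with a 1D Sobolev embedding on dyadic radial shells (which converts $L^p_r$ into $L^q_r$), in the spirit of the Klainerman--Sobolev vector field method. The two estimates \cref{E4.3} and \cref{E4.4} will be obtained by the same strategy, differing only in the choice of spherical Sobolev embedding.

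First, freeze the radial variable and apply the Sobolev inclusion on $S^{n-1}$: for \cref{E4.3}, use $H^{\lceil n/2\rceil}(S^{n-1})\hookrightarrow L^\infty(S^{n-1})$ (valid because $\lceil n/2\rceil>(n-1)/2$), to obtain
\begin{equation*}
\|u(r,\cdot)\|_{L^\infty_\omega}\lesssim \|\Omega^{\leq \lceil n/2\rceil}u(r,\cdot)\|_{L^2_\omega},\qquad r\geq R.
\end{equation*}
Since $\Omega\subset Y$, this reduces the left-hand side to $\|r^b\Omega^{\leq \lceil n/2\rceil}u\|_{L^q_r L^2_\omega}$. For \cref{E4.4} with finite $q_2$, use $H^s(S^{n-1})\hookrightarrow L^{q_2}(S^{n-1})$ instead; the sharp condition $s>(n-1)/2-(n-1)/q_2$ is comfortably implied by the standing hypothesis $s\geq n/2-n/q_2$.

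Second, decompose $\{r\geq R\}=\bigcup_{j\geq j_0}A_j$ with $A_j=\{r\sim 2^j\}$ and $2^{j_0}\sim R$, and on each shell apply the 1D Sobolev--Gagliardo--Nirenberg embedding on the interval of length $\sim 2^j$: for $2\leq p\leq q\leq \infty$ and a smooth function $G$ on a slight enlargement $\tilde A_j$,
\begin{equation*}
\|G\|_{L^q_r(A_j)}\lesssim 2^{j(1/q-1/p)}\|G\|_{L^p_r(\tilde A_j)}+2^{j(1+1/q-1/p)}\|\partial_r G\|_{L^p_r(\tilde A_j)}.
\end{equation*}
Take $G(r)=\|\Omega^{\leq \lceil n/2\rceil}u(r,\cdot)\|_{L^2_\omega}$, and use $|\partial_r G|\leq \|\partial_r\Omega^{\leq \lceil n/2\rceil}u\|_{L^2_\omega}$ together with $\partial_r\in Y$. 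Replace $r^b$ and $r^{b-(n-1)/p+(n-1)/q}$ by their shell values $2^{jb}$ and $2^{j(b-(n-1)/p+(n-1)/q)}$, and sum over $j\geq j_0$; the summation is legal since $p\leq q$ yields $\ell^p\hookrightarrow \ell^q$.

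The main technical obstacle is matching the weights shell-by-shell. The 1D radial Sobolev produces a factor $2^{j(1/q-1/p)}$, whereas the weight discrepancy between the two sides is $r^{-(n-1)(1/p-1/q)}\sim 2^{-j(n-1)(1/p-1/q)}$; for $n\geq 3$ and $p<q$ these factors do not match and a naive application overshoots. Closing the gap requires trading some $\Omega$-derivatives for $\partial$-derivatives on each shell through the relation $|\Omega u|\sim r|\partial u|$, redistributing powers of $2^j$ between the angular and radial families. This bookkeeping works out precisely at the total derivative count $k=\lfloor(n+2)/2\rfloor$, which coincides with the Klainerman--Sobolev threshold for $H^k(\R^n)\hookrightarrow L^\infty(\R^n)$ specialized to annular domains.
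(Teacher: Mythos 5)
This proposition is quoted from Lemma 4.3 of \cite{MR3691393}; the paper supplies no proof, so your argument must stand on its own, and as written it has two genuine gaps, located exactly at the sharp points of the lemma. The first is the derivative count. Your tensorized scheme spends $\lceil n/2\rceil$ rotations on $H^{\lceil n/2\rceil}(S^{n-1})\hookrightarrow L^\infty(S^{n-1})$ and then one further radial derivative in the one--dimensional Sobolev step, for a total of $\lceil n/2\rceil+1$ elements of $Y$. For even $n$ this equals $k=\lfloor(n+2)/2\rfloor$, but for odd $n$ it equals $k+1$: in particular for $n=3$ and $p<q$ you prove the estimate with $Y^{\le 3}$ rather than $Y^{\le 2}$, and the weaker version is useless in Section \ref{S4}, where only two vector fields are available. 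The sharp count is not a matter of ``redistributing powers of $2^j$''; it requires abandoning the tensorized embedding in favour of a Gagliardo--Nirenberg product argument, e.g. $\|u(r\,\cdot)\|_{L^\infty_\omega}^2\lesssim\|\Omega^{\le a}u(r\,\cdot)\|_{L^2_\omega}\|\Omega^{\le b}u(r\,\cdot)\|_{L^2_\omega}$ with $a+b>n-1$ and $a<b\le k$, followed by the fundamental theorem of calculus applied to this \emph{product} so that the single $\partial_r$ always falls on the factor carrying the smaller number of rotations. The same defect recurs in \cref{E4.4}, where your scheme consumes $s+1$ derivatives whenever $p<q$.

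The second gap is the weight bookkeeping in your final paragraph, which rests on a misreading of the norms and on an inadmissible step. The radial norms here are the $\L^p_r=L^p(r^{n-1}\d r)$ norms of Section \ref{S2} (this is forced by how \cref{E4.3} is invoked in the estimate of the first nonlinear term in Section \ref{S4}: the exponent count there only balances with the measure $r^{n-1}\d r$). Written with respect to $\d r$, the shift $-(n-1)/p+(n-1)/q$ makes the weights on the two sides \emph{identical}, namely $r^{b+(n-1)/q}$, so on each dyadic shell there is no discrepancy to close; the summation over shells via $\ell^p\hookrightarrow\ell^q$ then works exactly as you say. The obstruction you do face in the radial direction is one you create yourself by using the \emph{scaling} form of the one--dimensional Sobolev inequality: the factor $2^{j(1+1/q-1/p)}$ on the derivative term grows in $j$ and cannot be absorbed. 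The remedy is the translation--invariant form $\|G\|_{L^q(I)}\lesssim\|G\|_{L^p(I)}+\|G'\|_{L^p(I)}$, valid uniformly over all intervals with $|I|\ge 1$ (cover $I$ by unit intervals and use $\ell^p\hookrightarrow\ell^q$), which carries no power of $2^j$ at all. Finally, the proposed repair ``trade $\Omega$-derivatives for $\partial$-derivatives through $|\Omega u|\sim r|\partial u|$'' is not available: one only has $|\Omega u|\lesssim r|\partial u|$, since the rotations see just the angular part of the gradient, and substituting $r\partial$ for $\Omega$ \emph{enlarges} the right-hand side, so it cannot compensate a bound that is already too large.
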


\begin{prop}[Theorem 5.9 in \cite{MR3691393}] \label{P4.3}
Let $n\geq3$, 
and assume \eqref{H1} and \eqref{H2}. 
Then we have
\begin{Eq*}
\|Z^{\leq k}u\|_{L\!E_\g}\lesssim&\|Y^{\leq k}u(0,x)\|_{H_x^1}+\|Y^{\leq k}\partial_tu(0,x)\|_{L_x^2}+\|Z^{\leq k}F\|_{L_t^1L_x^2},
\end{Eq*}
for any $u$ satisfies $\square_\g u=F$ and any $k\geq 0$. 
\end{prop}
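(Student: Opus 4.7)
The strategy is to bootstrap the bare local-energy estimate of \eqref{H2}, which controls $\partial^{\leq k}u$, to the full $Z^{\leq k}$ version by inductively commuting the rotation vector fields $\Omega$ through $\square_\g$. First I note that any word in $Z=\{\partial,\Omega\}$ of length $\leq k$ can, using $[\partial_t,\Omega]=0$ and $[\partial_i,\Omega_{jk}]=\delta_{ij}\partial_k-\delta_{ik}\partial_j$, be rewritten as a sum of normal-form terms $\partial^\beta\Omega^\alpha$ with $|\beta|+|\alpha|\leq k$, so it suffices to prove by induction on $|\alpha|\leq k$ that
\begin{equation*}
\|\partial^{\leq k-|\alpha|}\Omega^\alpha u\|_{L\!E_\g}\lesssim\|Y^{\leq k}u(0,\cdot)\|_{H_x^1}+\|Y^{\leq k}\partial_t u(0,\cdot)\|_{L_x^2}+\|Z^{\leq k}F\|_{L_t^1L_x^2}.
\end{equation*}
For the inductive step I set $v=\Omega^\alpha u$, which solves $\square_\g v=\Omega^\alpha F+[\square_\g,\Omega^\alpha]u$, and apply \eqref{H2} at regularity level $k-|\alpha|$. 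The data terms are harmless because $\Omega$ is tangential to $\{t=0\}$ and $[\partial_t,\Omega]=0$, so $v(0,\cdot)=\Omega^\alpha f$ and $\partial_t v(0,\cdot)=\Omega^\alpha g$, while $\Omega^\alpha F$ is absorbed into $\|Z^{\leq k}F\|_{L_t^1L_x^2}$.

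The heart of the argument is controlling the commutator $[\square_\g,\Omega^\alpha]u$ using \eqref{H1}. Writing $\square_\g=\square_\m+\ga^1\cdot\partial^2+\ga^2\cdot\partial^2+(\text{lower order})$, one has $[\square_\m,\Omega_{jk}]=0$ by flat-space symmetry, and because $\ga^1(t,r)$ is radial the principal part of $[\ga^1\cdot\partial^2,\Omega_{jk}]$ also vanishes, leaving only lower-order contributions in which each derivative of $\ga^1$ produces an extra weight $\langle r\rangle^{-1}$ by \eqref{H1}. For the short-range piece, each commutation with $\Omega$ converts a factor $\ga^2$ into $x\partial\ga^2$, which again gains a $\langle r\rangle^{-1}$ from \eqref{H1}. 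Iterating yields a schematic pointwise bound
\begin{equation*}
|[\square_\g,\Omega^\alpha]u|\lesssim\langle r\rangle^{-1}\sum_{|\beta|\leq|\alpha|-1}|Z^\beta\partial u|+\langle r\rangle^{-2}\sum_{|\beta|\leq|\alpha|-1}|Z^\beta u|,
\end{equation*}
whose $L_t^1H_x^{k-|\alpha|}$-norm is estimated by Cauchy--Schwarz on dyadic annuli $\{|x|\sim 2^j\}$ in terms of $\|\ppsi_{R_1}\partial Z^{\leq|\alpha|-1}u\|_{\l_\infty^{-1/2}L_{t,x}^2}$ and $\|Z^{\leq|\alpha|-1}u\|_{\l_\infty^{-3/2}L_{t,x}^2}$, i.e.\ exactly the weak local-energy pieces appearing in \cref{E1.4}. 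These are bounded by the inductive hypothesis.

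The main obstacle I expect is this absorption step: the $\langle r\rangle^{-1}$ gain from each commutation is only just enough to match the local-energy weights, so the commutator norm must be estimated scale-by-scale, and the compact region $|x|\lesssim R_1$, where $\ga^2$ need not decay, has to be handled separately using the uniform-energy component of $L\!E_\g$ together with a Hardy-type inequality on bounded annuli. Once this absorption is executed carefully the induction closes and the proposition follows.
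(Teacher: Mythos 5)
First, a point of orientation: the paper does not prove this proposition at all --- it is imported verbatim from Theorem 5.9 of \cite{MR3691393} --- so your argument has to be judged on its own merits and against the strategy of that reference. Your starting point (commute $\Omega$ through $\square_\g$, exploit that the radial part $\ga^1$ commutes with rotations and that each commutation with the short-range part $\ga^2$ gains a factor $\langle r\rangle^{-1}$) is the right one, but the absorption step, which you correctly identify as the heart of the matter, does not close as written. The inhomogeneous term in \eqref{H2} is measured in $L^1_tH^k_x$, whereas the commutator terms are controlled, via the local-energy components of \cref{E1.4}, only in $L^2$ in time. Cauchy--Schwarz on dyadic annuli does handle the spatial weights (the $\ell^1$-dyadic summability of the coefficients in \eqref{H1} converts an $\l_1^{1/2}$ bound on the source into the $\l_\infty^{-1/2}$ bound supplied by $L\!E_\g$), but it cannot convert an $L^2_t$ quantity into an $L^1_t$ one without losing a factor $T^{1/2}$. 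What is actually needed is a local energy estimate whose forcing may be taken in the \emph{dual} local energy space $\l_1^{1/2}L^2_{t,x}+L^1_tL^2_x$; this is not part of \eqref{H2} and must be obtained separately. In \cite{MR3691393}, and in the present paper's own Section 4.4, this is done by truncating to the exterior $\{r\ge R\}$ with $R$ large, replacing $\g$ by $\tilde\g=\psi_{R-1}\m+\ppsi_{R-1}\g$ (a globally small asymptotically flat perturbation), and invoking a Metcalfe--Tataru type estimate (\Le{L4.5} here), which does admit dual-norm forcing and hence permits the absorption.

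The second genuine obstruction is the compact region $r\lesssim R_1$. There the commutator coefficients have no decay, and --- more seriously --- the weak norm \cref{E1.4} gives no $L^2_tL^2_x$ control of $\partial u$ in that region (the cutoff $\ppsi_{R_1}$ removes it; only $\|\partial u\|_{L^\infty_tL^2_x}$ and $\|u\|_{\l_\infty^{-3/2}L^2_{t,x}}$ survive). Your proposed fix, the uniform-energy component plus a Hardy inequality on bounded annuli, places an $L^\infty_t$ quantity into an $L^1_t$ norm and therefore costs a factor of $T$. The correct device is not to commute at all in the compact region: since $|\Omega^{\le k}u|\lesssim_R|\partial_x^{\le k}u|$ pointwise for $r\le R+1$, the interior portion of $\|Z^{\le k}u\|_{L\!E_\g}$ follows directly from \eqref{H2} applied to $\partial^{\le k}u$, and the commutator argument is run only for the exterior-truncated function $\ppsi_R Z^{\le k}u$, with the cutoff errors $[\square_\g,\ppsi_R]Z^{\le k}u$ supported on a fixed annulus where the local energy norm does give $L^2_{t,x}$ control. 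A minor further slip: your schematic bound for $[\square_\g,\Omega^\alpha]u$ loses a derivative --- the leading terms are of the form $\tilde\ga\,\partial^2 Z^{\le|\alpha|-1}u$ with $|\tilde\ga|\lesssim\langle r\rangle^{-1}$, not $\langle r\rangle^{-1}|Z^{\le|\alpha|-1}\partial u|$; the derivative count still closes against the inductive hypothesis, but only once the forcing is measured in the dual local energy norm as above.
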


\begin{lemma}\label{L4.5}
For $n=3$, 
consider
\begin{Eq*}
P=\partial_t^2-\Delta+\ga^{\mu\nu}(t,x)\partial_\mu\partial_\nu+\b^\mu(t,x)\partial_\mu,
\end{Eq*}
with $\ga^{\mu\nu}=\ga^{\nu\mu}\in C^2$, 
$\b^\mu\in C^1$, 
and
\begin{Eq*}
\|\ga\|_{\l_1^0L_{t,x}^\infty}+\|(\partial \ga,\b)\|_{\l_1^1L_{t,x}^\infty}+\|(\partial^2\ga,\partial\b)\|_{\l_1^2L_{t,x}^\infty}\leq \delta\ .
\end{Eq*}
Then
there exists $\delta>0$ so that
 we have
\begin{Eq}\label{E4.5}
\|\partial u\|_{W^s\cap X^s}\lesssim\|\partial u(0)\|_{\H_x^s}+\|Pu\|_{(W^{-s}\cap X^{-s})'},
\end{Eq}
for any $|s|<1$.
\end{lemma}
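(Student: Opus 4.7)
The proof will follow a perturbative strategy, treating $P$ as a small variable-coefficient perturbation of the flat d'Alembertian $\square_\m$ and bootstrapping from a Metcalfe--Tataru type local energy estimate. The first step is to establish the flat analog of \cref{E4.5}: for $P=\square_\m$,
\[
\|\pa u\|_{W^0\cap X^0}\les \|\pa u(0)\|_{L^2_x}+\|\square_\m u\|_{(W^0\cap X^0)'},
\]
which is essentially the main estimate of \cite{MR2944027} (the estimate underlying \Pr{L3.1}). Because $\square_\m$ commutes with the Littlewood--Paley projectors $S_k$, applying this estimate to $S_k u$ and summing in $\l^s_2$ produces the $s$-indexed flat version for $s>0$; dualizing then extends it to $s<0$, so the flat case holds on the full range $|s|<1$.

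The second step moves from $\square_\m$ to $P$ by absorption. Writing $\square_\m u = Pu - \ga^{\mu\nu}\pa_\mu\pa_\nu u - \b^\mu\pa_\mu u$ and plugging into the flat estimate, the lemma reduces to the multiplier bounds
\[
\|\ga^{\mu\nu}\pa_\mu\pa_\nu u\|_{(W^{-s}\cap X^{-s})'}+\|\b^\mu\pa_\mu u\|_{(W^{-s}\cap X^{-s})'}\les \de\,\|\pa u\|_{W^s\cap X^s},
\]
so that for $\de$ sufficiently small the right-hand side can be absorbed on the left. For the second-order term we use the divergence identity $\ga^{\mu\nu}\pa_\mu\pa_\nu u = \pa_\mu(\ga^{\mu\nu}\pa_\nu u) - (\pa_\mu\ga^{\mu\nu})\pa_\nu u$: when paired against a test function $v\in W^{-s}\cap X^{-s}$, the divergence is transferred onto $\pa_\mu v$. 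The remaining estimates reduce to H\"older-type pairings of $\ga\cdot\pa u$ with $\pa v$ and of $(\pa\ga,\b)\cdot\pa u$ with $v$ on the dyadic annuli $|x|\sim 2^j$; the $\l_1^0$ and $\l_1^1$ smallness of $\ga$ and $(\pa\ga,\b)$ matches, via \Pr{L3.1}, the dyadic behavior of $\pa u$ and $v$ in $X^0$, producing the desired gain of $\de$.

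For $s\neq 0$ the complication is that $S_k$ does not commute with multiplication by the coefficients, so one must also control the commutators $[S_k,\ga]$ and $[S_k,\b]$ in the relevant dyadic norms. The hypotheses on $\pa^2\ga$ and $\pa\b$ in $\l_1^2 L^\infty$ are exactly what is needed for such commutator estimates to hold uniformly in $k$ throughout $|s|<1$, and they degenerate as $|s|\to 1$, which is the source of the strict inequality. The main technical obstacle is therefore to push the multiplier-and-commutator bounds through the full range with a constant that can be made small with $\de$; once this is in hand, real or complex interpolation between the $s=0$ estimate and its dual, combined with the absorption argument, closes the proof. This also clarifies the remark following the statement: a zero-order term $cu$ cannot be placed in divergence form, so bounding $\|cu\|_{(W^{-s}\cap X^{-s})'}$ by $\|\pa u\|_{W^s\cap X^s}$ requires trading $u$ for $\pa u$ via a Hardy-type inequality, which consumes range at the positive end and restricts $s$ to $(-1,0)$.
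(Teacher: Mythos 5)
Your proposal takes a genuinely different route from the paper, and that route has a gap at its central step. The paper does not perturb off the flat d'Alembertian: it invokes eq.~(50) of \cite{MR2944027}, which is the uniform energy / local energy estimate for the \emph{frequency-localized variable-coefficient} operators $P_{(k)}$, applies it to $S_ku$, sums with the weights $2^{2sk}$, and absorbs the resulting error $\de\|\pa u\|_{X^s}^2$ coming from $P-P_{(k)}$ and the commutators with $S_k$. The hard analytic content is thus taken as a black box from Metcalfe--Tataru, and the lemma really is the advertised ``simple modification'' (re-summation with an $s$-weight plus absorption for small $\de$).

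The gap in your plan is the absorption of the second-order perturbation: you need $\|\ga^{\mu\nu}\pa_\mu\pa_\nu u\|_{(W^{-s}\cap X^{-s})'}\les\de\,\|\pa u\|_{W^s\cap X^s}$, and the H\"older/divergence-form pairing you describe cannot produce it. After writing $\ga^{\mu\nu}\pa_\mu\pa_\nu u=\pa_\mu(\ga^{\mu\nu}\pa_\nu u)-(\pa_\mu\ga^{\mu\nu})\pa_\nu u$ and pairing with a test function $v$, the derivative must land on $v$; but $\|v\|_{W^{-s}\cap X^{-s}}$ is a norm on $v$ itself (see \cref{E2.1}), so $\pa v$ is not controlled. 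Without integrating by parts you instead need $\pa^2u$ in $\l_\infty^{-1/2}L^2_{t,x}$, which the left-hand side of \cref{E4.5} does not supply. (The first-order term $\b^\mu\pa_\mu u$ does close exactly as you sketch, using $\|\b\|_{\l_1^1L_{t,x}^\infty}\le\de$ together with the duality of \cref{E3.1}; the obstruction is purely the principal part.) Resolving this derivative mismatch --- by truncating the coefficients in frequency, a paradifferential decomposition, and an outgoing parametrix construction --- is precisely the content of \cite[Theorem 1]{MR2944027} and of its frequency-localized form (50), so your argument must either reprove that theorem or cite it; as written, the ``desired gain of $\de$'' is asserted exactly where the difficulty lies. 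Your heuristics for the restriction $|s|<1$ (commutators $[S_k,\ga]$ costing $2^{-k}\pa\ga$) and for the range $s\in(-1,0)$ in the presence of a zero-order term are reasonable, but secondary to this missing core estimate.
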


\begin{proof}
This lemma is a simple modification of \cite[Theroem 1]{MR2944027}. For the convenience of the reader, we adopt the notation in \cite{MR2944027} and give a sketch of the proof. 
By \cite[eq.(50)]{MR2944027} we have
\begin{Eq*}
\|\partial u\|_{L_t^\infty L_x^2\cap X_k}\lesssim\|\partial u(0)\|_{L_x^2}+\|P_{(k)}u\|_{L_t^1L_x^2+X_k'}.
\end{Eq*}
Then after summation we obtain
\begin{Eq*}
\|\partial u\|_{W^s\cap X^s}^2\lesssim& \sum_k 2^{2sk}\|\partial S_ku\|_{L_t^\infty L_x^2\cap X_k}^2\\
\lesssim&\sum_k 2^{2sk}\k({\|S_k\partial u(0)\|_{L_x^2}^2+\|P_{(k)}S_ku\|_{L_t^1 L_x^2+ X_k'}^2})\\
\lesssim&\|\partial u(0)\|_{H_x^s}^2+\|Pu\|_{(W^{-s}\cap X^{-s})'}^2+\delta\|\partial u\|_{X^s}^2,
\end{Eq*}
which gives us
\cref{E4.5} provided that $\delta$ is small. 
\end{proof}

\begin{coro}\label{C4.6}
Let $n=3$,   $s>0$ and $q, o, \alpha, s$ satisfy the condition of \cref{E1.5}.
Assume \eqref{H1},
then there exists $K\geq R_0$ such that
\begin{Eq}\label{E4.6}
\|u\|_{\DS_{q,o,\alpha,s}\cap L_t^\infty \H_x^s}\lesssim&\|f\|_{\H_x^{s}}+\|g\|_{\H_x^{s-1}}+\|F\|_{(W^{1-s}\cap X^{1-s})'}\ ,
\end{Eq}
for any $u$ satisfying $\square_\g u=F$ with initial data $(f,g)$ on $\{t=0\}$ and vanishing in the region $\{r<K\}$. 
\end{coro}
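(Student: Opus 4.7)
The plan is to reduce the left-hand side to $\|u\|_{W^{s}\cap X^{s}}$ through \Th{T1.1}, then pass by frequency localization to $\|\partial u\|_{W^{s-1}\cap X^{s-1}}$, and finally invoke \Le{L4.5} at regularity $s-1$ after replacing $\g$ by a globally small perturbation of $\m$ that still coincides with $\g$ on the support of $u$.

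For the first reduction, since we may take $K\ge 2$ we have $\ppsi_{1}u=u$, so \cref{E1.5} yields $\|u\|_{\DS_{q,o,\alpha,s}}\lesssim\|u\|_{W^{s}\cap X^{s}}$. The Littlewood-Paley characterization of $\H^{s}$ combined with Minkowski's inequality also gives $\|u\|_{L_{t}^{\infty}\H_{x}^{s}}\lesssim\|u\|_{W^{s}}$. For the second reduction, using $\|\nabla S_{k}u\|_{L_{x}^{2}}\approx 2^{k}\|S_{k}u\|_{L_{x}^{2}}$ and, by direct inspection of the $X_{k}$ norm, $\|\nabla S_{k}u\|_{X_{k}}\approx 2^{k}\|S_{k}u\|_{X_{k}}$, one obtains, after summing in $k$ with weight $2^{(s-1)k}$, the bound $\|u\|_{W^{s}\cap X^{s}}\lesssim\|\partial u\|_{W^{s-1}\cap X^{s-1}}$.

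To globalize the metric, fix $\chi\in C^{\infty}(\R)$ with $\chi=0$ on $r\le 1/2$ and $\chi=1$ on $r\ge 1$, and set $\tilde{\g}\dyw\m+\chi(r/K)(\ga^{1}+\ga^{2})$. Writing $\square_{\tilde{\g}}$ in the form $\partial_{t}^{2}-\Delta+\tilde{\ga}^{\mu\nu}\partial_{\mu}\partial_{\nu}+\tilde{\b}^{\mu}\partial_{\mu}$, hypothesis \eqref{H1} shows that the sum $\|\tilde{\ga}\|_{\l_{1}^{0}L_{t,x}^{\infty}}+\|(\partial\tilde{\ga},\tilde{\b})\|_{\l_{1}^{1}L_{t,x}^{\infty}}+\|(\partial^{2}\tilde{\ga},\partial\tilde{\b})\|_{\l_{1}^{2}L_{t,x}^{\infty}}$ is dominated by the tail $\sum_{2^{k}\gtrsim K}C_{\alpha,j}$, which can be made smaller than the $\delta$ needed in \Le{L4.5} by choosing $K$ large. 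Since $\tilde{\g}=\g$ for $r\ge K$ and $u\equiv 0$ for $r<K$, we have $\square_{\tilde{\g}}u=F$ throughout $M$. As $s\in(0,3/2)$ implies $s-1\in(-1,1/2)\subset(-1,1)$, \Le{L4.5} yields
\begin{equation*}
\|\partial u\|_{W^{s-1}\cap X^{s-1}}\lesssim\|\partial u(0)\|_{\H_{x}^{s-1}}+\|F\|_{(W^{1-s}\cap X^{1-s})'}\lesssim\|f\|_{\H_{x}^{s}}+\|g\|_{\H_{x}^{s-1}}+\|F\|_{(W^{1-s}\cap X^{1-s})'},
\end{equation*}
and chaining this with the previous two reductions produces \cref{E4.6}.

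The main technical delicacy lies in verifying the smallness of $\tilde{\g}$: one must control not only the dyadic tails of $\ga^{1}+\ga^{2}$ in the $\l_{1}^{j}L_{t,x}^{\infty}$-norms, but also the extra terms coming from differentiating the cutoff $\chi(r/K)$, which deliver factors of $K^{-|\alpha|}$ supported on $r\sim K$ and are therefore compatible with the summability and decay provided by \eqref{H1}. Everything else is a routine combination of frequency localization with the two results already recorded.
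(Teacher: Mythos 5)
Your proposal is correct and follows essentially the same route as the paper: reduce the left-hand side to $\|u\|_{W^s\cap X^s}$ via \cref{E1.5}, pass to $\|\nabla u\|_{W^{s-1}\cap X^{s-1}}$ by frequency localization, and apply \Le{L4.5} to the truncated metric $\tilde\g$ (the paper takes $\tilde\g=\psi_{K-1}\m+\ppsi_{K-1}\g$, which is the same construction as your $\m+\chi(r/K)(\ga^1+\ga^2)$ up to the choice of cutoff), using $s\in(0,3/2)$ so that $|s-1|<1$.
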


\begin{proof}
Based on \eqref{H1}, 
we set $\tilde\g=\psi_{K-1}\m+\ppsi_{K-1}\g$ with $K\geq R_0+1$ such that $\tilde\g$ satisfies the assumption in \Le{L4.5}. 
Since we are assuming that $u$ vanishes in the region $\{r<K\}$, 
we have $\square_{\g}u=\square_{\tilde\g}u$. 
Then, 
by \cref{E1.5} and \cref{E4.5}, we get
$$
\|u\|_{\DS_{q,o,\alpha,s}\cap L_t^\infty \H_x^s}\les 
\|u\|_{W^s\cap X^s}\les
\|\nabla u\|_{W^{s-1}\cap X^{s-1}}
\les
\|f\|_{\H_x^{s}}+\|g\|_{\H_x^{s-1}}+\|F\|_{(W^{1-s}\cap X^{1-s})'}\ ,
$$
where we have assumed $s>0$ so that $|s-1|<1$ to apply \cref{E4.5}.
\end{proof}

\subsection{Proof of \Th{T4.1} with $p_1>2$}~

Now we begin the proof of \Th{T4.1}.  To close the estimates we define $\DS_\iota=\DS_{q_\iota,o_\iota,\alpha_\iota,s_\iota}$ and $\widetilde\DS_\iota=\DS_{\tilde q_\iota,\tilde o_\iota,\tilde \alpha_\iota,s_\iota}$ with $s_\iota$ defined in \cref{E4.1} and
\begin{Eq}\label{E4.7}
q_\iota\dyw& 2p_\hiota,&~ o_\iota\dyw&2p_\hiota-p_1,&~ \alpha_\iota\dyw&\frac{\sigma(p_\hiota-1)}{p_\hiota},\\
\tilde q_\iota\dyw& p_\hiota,&~ \tilde o_\iota\dyw& p_\hiota,&~ \tilde\alpha_\iota\dyw& s_\iota-\frac{3}{2}+\frac{3}{p_\hiota}.
\end{Eq}
Then we define 
\begin{alignat}{1}
\label{E4.8}\|u\|_{U_\iota^m}=&\|\ppsi_R Z^{\leq m}u\|_{\DS_{\iota}\cap \widetilde{\DS}_{\iota}}+\|Z^{\leq m}u\|_{L\!E_{\g}},\\
\label{E4.9}\|F\|_{V_\iota^m}=&\|\ppsi_R^{p_\iota} r^{3-{s_\iota}-\frac{3p_\iota}{o_\hiota}} Z^{\leq m}F\|_{L_t^2\L_r^{\frac{o_\hiota}{p_\iota}}L_\omega^2}+\|Z^{\leq m}F\|_{L_t^1L_x^2}.
\end{alignat}
Here we mention that 
$s_{\iota}>0$ as $p_{1}>2$,
the indices defined in \cref{E4.7} satisfy 
$\tilde\alpha_\iota<0$ and
the condition of \cref{E1.5}.
Moreover, the indices $q=2$, 
$o=(o_\hiota/p_\iota)'$, 
$s=1-s_\iota$ satisfy the condition of \cref{E1.6}, 
and
\begin{Eq}\label{E4.10}
3-s_\iota-\frac{3p_\iota}{o_\hiota}=p_\iota(\frac{3}{2}+\alpha_\hiota-\frac{1}{q_\hiota}-\frac{3}{o_\hiota}-s_\hiota).
\end{Eq}

Next, 
we present the linear estimates, which will be proved in subsection \ref{sec:4.4}.
\begin{lemma}\label{L4.7}
For $2< p_1\leq p_2$, assume \eqref{H1} and \eqref{H2}, 
there exists $R>0$ big enough such that for any $m\geq 0$
\begin{Eq}\label{E4.11}
\|u_\iota\|_{U_\iota^m}\lesssim& \|Y^{\leq m}f_\iota\|_{H_x^1}+\|Y^{\leq m}g_\iota\|_{L_x^2}+\|\ppsi_R Y^{\leq m}g_\iota\|_{\H_x^{s_\iota-1}}\\
&+\|\ppsi_RZ^{\leq m-1}F_{\iota}(0,x)\|_{\H_x^{s_\iota-1}}+\|F_\iota\|_{V_\iota^m},
\end{Eq}
where $u_\iota$ is the solution of
\begin{Eq*}
\square_\g u_\iota=F_\iota,\qquad u_\iota(0,x)=f_\iota(x),\qquad \partial_tu_\iota(0,x)=g_\iota(x).
\end{Eq*}
\end{lemma}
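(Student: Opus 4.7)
The two components of $\|u_\iota\|_{U_\iota^m}$, namely $\|\ppsi_R Z^{\leq m}u_\iota\|_{\DS_\iota\cap\widetilde{\DS}_\iota}$ and $\|Z^{\leq m}u_\iota\|_{L\!E_\g}$, can be treated essentially independently. \Pr{P4.3} applied to $\square_\g u_\iota=F_\iota$ bounds the second piece by $\|Y^{\leq m}f_\iota\|_{H_x^1}+\|Y^{\leq m}g_\iota\|_{L_x^2}+\|Z^{\leq m}F_\iota\|_{L_t^1L_x^2}$, whose first two summands appear directly in \eqref{E4.11} and whose last summand is part of $\|F_\iota\|_{V_\iota^m}$. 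For the weighted $\DS$-type piece, I would apply \Co{C4.6} to the localized unknown $v\dyw\ppsi_R Z^{\leq m}u_\iota$, which is supported in $r\ge R$ and hence admissible for \Co{C4.6} once $R$ exceeds the constant $K$ produced there. \Co{C4.6} then yields
\[
\|v\|_{\DS_\iota\cap\widetilde{\DS}_\iota\cap L_t^\infty\H_x^{s_\iota}}\lesssim\|v(0)\|_{\H_x^{s_\iota}}+\|\partial_t v(0)\|_{\H_x^{s_\iota-1}}+\|\square_\g v\|_{(W^{1-s_\iota}\cap X^{1-s_\iota})'},
\]
and the remaining work is to interpret each right-hand term.

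I would expand $\square_\g v=\ppsi_R Z^{\leq m}F_\iota+\ppsi_R[\square_\g,Z^{\leq m}]u_\iota+[\square_\g,\ppsi_R]Z^{\leq m}u_\iota$. The cutoff commutator has coefficients supported in the annulus $R\le r\le R+1$ and is therefore dominated in $L_t^1L_x^2\subset(W^{1-s_\iota}\cap X^{1-s_\iota})'$ by $\|Z^{\leq m}u_\iota\|_{L\!E_\g}$, using the weighted $L_t^2L_x^2$ ingredients of \eqref{E1.4}. The vector-field commutator is of metric-perturbation type under \eqref{H1}: $\partial$ commutes with $\square_\g$ up to short-range coefficients, while $\Omega$ commutes with both $\square_\m$ and the radial long-range piece $\ga^1$ and so interacts only with the short-range $\ga^2$. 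Summing and exploiting the decay rates $r^{-|\alpha|-j}$ of $\partial^\alpha\ga^j$, the resulting errors are again dominated by $\|Z^{\leq m}u_\iota\|_{L\!E_\g}$, which \Pr{P4.3} has already bounded by the right-hand side of \eqref{E4.11}. No bootstrap is needed, only addition of the two component bounds.

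The initial data of $v$ are handled by writing $v(0)=\ppsi_R Z^{\leq m}u_\iota(0)$ and, using that $\ppsi_R$ is $t$-independent and $[\partial_t,\Omega]=0$, $\partial_t v(0)=\ppsi_R Z^{\leq m}g_\iota$ whenever $Z^{\leq m}$ contains no $\partial_t$. Since $p_1>2$ forces $0<s_\iota<1$, the $\H_x^{s_\iota}$-norm of $v(0)$ is controlled by $\|Y^{\leq m}f_\iota\|_{H_x^1}$ via interpolation between $L_x^2$ and $\dot H_x^1$, and the $\H_x^{s_\iota-1}$-norm of $\partial_t v(0)$ in that case is precisely $\|\ppsi_R Y^{\leq m}g_\iota\|_{\H_x^{s_\iota-1}}$. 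In the remaining cases, each extra $\partial_t$ is traded, using $\partial_t^2u_\iota=F_\iota+\Delta u_\iota+(\mathrm{metric\ perturbation})$ at $t=0$, against spatial derivatives of the data plus lower-order time derivatives of $F_\iota$, producing the term $\|\ppsi_R Z^{\leq m-1}F_\iota(0,x)\|_{\H_x^{s_\iota-1}}$ appearing in \eqref{E4.11}.

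Finally, the forcing contribution is handled by duality. The bound $\|\ppsi_R Z^{\leq m}F_\iota\|_{(W^{1-s_\iota}\cap X^{1-s_\iota})'}\lesssim\|F_\iota\|_{V_\iota^m}$ is dual to \cref{E1.6} applied with $(q,o,s)=(2,(o_\hiota/p_\iota)',1-s_\iota)$; the matching of weights is exactly the arithmetic identity \eqref{E4.10}, and the admissibility range $1/2-1/o<s<3/2-1/q$ is inherited from the admissibility of $\DS_\iota$ and $\widetilde{\DS}_\iota$. I expect the main obstacle to be the bookkeeping in the commutator step: for the absorption of $\ppsi_R[\square_\g,Z^{\leq m}]u_\iota$ into $L\!E_\g$ to be meaningful, one must carefully align the weighted local-energy decay in \eqref{E1.4} with the precise rates of $\partial^\alpha\ga^j$ coming from \eqref{H1}. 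The asymptotic flatness, together with the radial structure of the long-range part, is exactly what makes this closure possible.
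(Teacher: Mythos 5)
Your overall strategy coincides with the paper's: bound the $L\!E_\g$ component by \Pr{P4.3}, apply \Co{C4.6} to $w=\ppsi_R Z^{\leq m}u_\iota$, split $\square_\g w$ into the cutoff commutator, the vector-field commutator and the forcing, absorb the commutators into the local energy norm, trade $\partial_t^2$ at $t=0$ via the equation to produce the $F_\iota(0,x)$ term, and treat the forcing by duality of \cref{E1.6} through the identity \cref{E4.10}. Two steps, however, are not quite right as written.

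First, the cutoff commutator $[\square_\g,\ppsi_R]Z^{\leq m}u_\iota$ cannot be placed in $L_t^1L_x^2$ and then dominated by $\|Z^{\leq m}u_\iota\|_{L\!E_\g}$: the local energy norm only provides $L_t^2$-in-time control on the annulus $r\in[R,R+1]$, so an $L_t^1([0,T])$ bound would cost a factor $T^{1/2}$ and destroy the uniformity in $T$ on which the whole scheme rests. The correct route, and the one the paper takes, is to use the dual of the weighted $L_t^2L_x^2$ estimate \cref{E3.2}: since the commutator coefficients live in a fixed annulus, one gets
$\|[\square_\g,\ppsi_R]Z^{\leq m}u_\iota\|_{(X^{1-s_\iota})'}\lesssim\|\partial^{\leq m+1}u_\iota\|_{L_t^2L_x^2(r\in[R,R+1])}$,
which is genuinely part of $L\!E_\g$.

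Second, the vector-field commutator is not dominated by the local energy norm alone. To make $\ppsi_R[\square_\g,Z^{\leq m}]u_\iota$ usable one needs (i) a divergence structure $\partial_x(\beta_1\ppsi_R\partial Z^{\leq m-1}u_\iota)$ with $\beta_1\in \l_1^{1}L_{t,x}^\infty$ --- it is the $l^1$ dyadic summability of the coefficient, not merely its pointwise decay rate, that converts the $l_\infty^{-1/2}$ control supplied by $L\!E_\g$ into the $l_1^{1/2}$ dual norm via \cref{E3.1} --- and (ii) substitution of the equation to eliminate $\partial_t^2Z^{\leq m-1}u_\iota$, which produces an extra term $\beta_1\ppsi_R Z^{\leq m-1}F_\iota$ (see \cref{E4.15}). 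This forcing contribution is absent from your account; it is ultimately harmless, since it is absorbed together with $\ppsi_R Z^{\leq m}F_\iota$ into $\|F_\iota\|_{V_\iota^m}$ by the same duality argument, but it must be tracked. With these two corrections your argument matches the paper's proof.
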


On the other hand, 
we have the following  nonlinear estimates, to be proved in subsection \ref{sec:4.5}.
\begin{lemma}\label{L4.8}
For $2< p_1\leq p_2$, 
we have
\begin{alignat}{1}
\label{E4.12}\|F_\iota(u_\hiota)\|_{V_\iota^2}\lesssim& A_{q_\hiota\alpha_\hiota/2}(T)^{-\frac{2p_\iota}{q_\hiota}}\|u_\hiota\|_{U_\hiota^2}^{p_\iota},\\
\label{E4.13}\|F_\iota(u_\hiota)-F_\iota(v_\hiota)\|_{V_\iota^0}\lesssim&  A_{q_\hiota\alpha_\hiota/2}(T)^{-\frac{2p_\iota}{q_\hiota}}\|(u_\hiota,v_\hiota)\|_{U_\hiota^2}^{p_\iota-1}\|u_\hiota-v_\hiota\|_{U_\hiota^0}.
\end{alignat}
\end{lemma}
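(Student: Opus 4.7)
The plan is to prove both estimates by pointwise dominating $Z^{\leq 2}F_\iota$ via the chain rule, then redistributing weights through H\"older in $t$, $r$, and $\omega$ so as to produce the building blocks $\ppsi_R Z^{\leq 2}u_\hiota\in\DS_\hiota\cap\widetilde{\DS}_\hiota$ and $Z^{\leq 2}u_\hiota\in L\!E_\g$ that constitute $\|u_\hiota\|_{U_\hiota^2}$. Since \cref{E4.13} is parallel to \cref{E4.12}, differing only in that the pointwise input is the Lipschitz bound $|F_\iota(u)-F_\iota(v)|\lesssim (|u|+|v|)^{p_\iota-1}|u-v|$ rather than the chain rule, I focus on \cref{E4.12}. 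Because $F_\iota\in C^2$ with $|\partial^j F_\iota(u)|\lesssim |u|^{p_\iota-j}$, schematically
\[
|Z^{\leq 2}F_\iota(u_\hiota)|\lesssim |u_\hiota|^{p_\iota-1}|Z^{\leq 2}u_\hiota|+|u_\hiota|^{p_\iota-2}|Zu_\hiota|^2,
\]
and I will carry only the first, dominant term, since the second admits the identical treatment.

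For the weighted piece of $V_\iota^2$, supported in $\{r>R\}$, I first invoke Sobolev on the sphere, $H^2(S^2)\hookrightarrow L^\infty(S^2)$, implemented via $\Omega^{\leq 2}\subset Z^{\leq 2}$, to put $p_\iota-1$ factors of $\ppsi_R u_\hiota$ into $L_\omega^\infty$ and leave the last $Z^{\leq 2}u_\hiota$ in $L_\omega^2$. The identity \cref{E4.10} factors the radial weight $r^{3-s_\iota-3p_\iota/o_\hiota}$ into $p_\iota$ equal copies of $r^{3/2+\alpha_\hiota-1/q_\hiota-3/o_\hiota-s_\hiota}$. Then H\"older in $r$, with exponents determined by $(p_\iota-1)/o_\hiota+1/o_\hiota=p_\iota/o_\hiota$, followed by H\"older in $t$, using $q_\hiota=2p_\iota$ from \cref{E4.7}, assembles $p_\iota$ copies of $L_t^{q_\hiota}\L_r^{o_\hiota}L_\omega^2$. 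By the definition of $\DS_\hiota$ in \cref{E1.5}, the extra $A$-factor appears with exactly the power $-2p_\iota/q_\hiota$, yielding the target bound $A_{q_\hiota\alpha_\hiota/2}(T)^{-2p_\iota/q_\hiota}\|\ppsi_R Z^{\leq 2}u_\hiota\|_{\DS_\hiota}^{p_\iota}$.

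The $L_t^1L_x^2$ piece will be split at $r=R$. On the far region $\{r>R\}$ the plan is to exploit $\widetilde{\DS}_\hiota$: since $\tilde q_\hiota=\tilde o_\hiota=p_\iota$ and $\tilde\alpha_\hiota<0$ (so $A=1$), H\"older in $t$ and in $r$ with $p_\iota$ copies each reduces matters to an $L_t^{p_\iota}\L_r^{2p_\iota}L_\omega^\infty$ norm for the $p_\iota-1$ copies of $u_\hiota$, which is absorbed into $\widetilde{\DS}_\hiota$ via the weighted trace estimate \cref{E4.3} with $(p,q,b)=(p_\iota,2p_\iota,0)$, namely $\|w\|_{\L_r^{2p_\iota}L_\omega^\infty}\lesssim\|r^{-1/p_\iota}Y^{\leq 2}w\|_{\L_r^{p_\iota}L_\omega^2}$, together with its $L_\omega^2$ variant \cref{E4.4} for the remaining $Z^{\leq 2}u_\hiota$ factor. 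On the near region $\{r\leq R\}$, the local energy component of $U_\hiota^2$ controls $\|Z^{\leq 2}u_\hiota\|_{L_t^2L_x^2(|x|\leq R)}$, while Sobolev embedding $H^2\hookrightarrow L^\infty$ in 3D, combined with Hardy's inequality to estimate $\|u_\hiota\|_{L_x^2(|x|\leq R)}$ by $\|\partial u_\hiota\|_{L_x^2}$, controls $\|u_\hiota\|_{L_t^\infty L_x^\infty(|x|\leq R)}$ and $\|u_\hiota\|_{L_t^2L_x^\infty(|x|\leq R)}$; interpolating gives $\|u_\hiota\|_{L_t^{2(p_\iota-1)}L_x^\infty(|x|\leq R)}$, which is finite precisely because $p_\iota>2$, and pairing it with the $L_t^2L_x^2$ top-order factor yields $\|u_\hiota\|_{L\!E_\g}^{p_\iota}$.

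The main obstacle will be the sheer bookkeeping: verifying that the indices $(q_\iota,o_\iota,\alpha_\iota,s_\iota,\tilde q_\iota,\tilde o_\iota,\tilde\alpha_\iota)$ of \cref{E4.1} and \cref{E4.7} satisfy every H\"older and trace-estimate constraint, that the radial weights align through the identity \cref{E4.10}, and that the various $\ppsi_R,\ppsi_1$ cutoffs are mutually compatible; in particular the restriction $p_1>2$ in this part of the argument is forced by the need for $L_t^{2(p_\iota-1)}$ on the bounded region (the case $p_1=2$ will require the compact-support hypothesis to bypass this). Once these algebraic conditions are confirmed, the remainder is routine, and the difference estimate \cref{E4.13} follows from the same decomposition, replacing $|u|^{p_\iota-1}|Z^{\leq 2}u|$ by $(|u|+|v|)^{p_\iota-2}(|Z^{\leq 2}u|+|Z^{\leq 2}v|)|u-v|$ at the pointwise step and rerunning the H\"older chain.
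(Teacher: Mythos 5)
Your proposal is correct and follows essentially the same route as the paper: the pointwise chain-rule decomposition into $|u|^{p_\iota-1}|Z^{\le2}u|+|u|^{p_\iota-2}|Zu|^2$, Sobolev on the sphere plus the weight identity \cref{E4.10} and H\"older to reduce the weighted semi-norm to $p_\iota$ copies of the $\DS_\hiota$ norm, and a near/far splitting of the $L_t^1L_x^2$ piece handled by the trace estimates \cref{E4.3}--\cref{E4.4} (far, via $\widetilde{\DS}_\hiota$) and Sobolev embedding with the local energy norm (near). The only points you gloss over are that the quadratic term $|u|^{p_\iota-2}|Zu|^2$ in the far region requires a slightly different H\"older distribution (the paper splits into $p_\iota\le4$ and $p_\iota>4$), and that the paper uses $\H_x^1\cap\H_x^2\subset L_x^\infty$ rather than Hardy on the compact region — both within the ``bookkeeping'' you acknowledge.
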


Equipped with \Le{L4.7} and \Le{L4.8}, 
it is a standard procedure to prove \Th{T4.1} with $p_{1}, p_{2}>2$. Let $T_\varepsilon$ be given in \cref{E1.7} and $c, \ep$ small enough such that $A_{q_\hiota\alpha_\hiota/2}(T_\varepsilon)^{-{2p_\iota}/{q_\hiota}}\varepsilon^{p_\iota-1}\ll 1$,
we could iterate in $X_k\dyw\{(u_1,u_2):\max_\iota\|u_\iota\|_{U_\iota^k}\leq\varepsilon\}$, 
with the time interval $[0,T_\varepsilon]$.
By \cref{E4.11}, 
\cref{E4.12} and \cref{E4.13}, we know that the sequence is well defined in $X_2$ and contractive in $X_0$. 
Then we obtain the fixed point $(u_1,u_2)\in X_2$ solving \cref{E1.2}.
Again, by \cref{E4.11} and \cref{E4.13}, we know the solution is unique, 
which finishes the proof.

\subsection{Proof of \Le{L4.7}}\label{sec:4.4}~

Without loss of generality we omit the index $\iota$. 
For the $\DS$ and $\widetilde \DS$ norm defined in \cref{E4.8}, 
set $R\geq K$ big enough such that $w\dyw\ppsi_R Z^{\leq m}u$ satisfies \Co{C4.6}, 
we obtain
\begin{Eq*}
\|w\|_{\DS\cap\widetilde\DS}\lesssim\|\partial w(0,x)\|_{\H_x^{s-1}}+\|\square_{\g}w\|_{(W^{1-s}\cap X^{1-s})'}.
\end{Eq*}

Similar to the proof of \cite[Lemma 5.6.]{MR3691393} we have
\begin{Eq}\label{E4.14}
\square_{\g}w&=[\square_\g,\ppsi_R]Z^{\leq m}u+\ppsi_R[\square_\g,Z^{\leq m}]u+\ppsi_R Z^{\leq m}F.
\end{Eq}
Now, 
we can control each term in $\square_{\g}w$ separately. 
For the first term in \cref{E4.14}, 
with the help of the duality of \cref{E3.2}, 
we see
\begin{Eq*}
\|[\square_\g,\ppsi_R]Z^{\leq m}u\|_{(W^{1-s}\cap X^{1-s})'}\lesssim&\|\chi_{r\in [R,R+1]} \mathcal{O}(\partial^{\leq {m+1}} u)\|_{(W^{1-s}\cap X^{1-s})'}\\
\lesssim&\|r^{\frac{3}{2}-s}\chi_{r\in [R,R+1]} \mathcal{O}(\partial^{\leq {m+1}} u)\|_{L_t^2L_x^2}\\
\lesssim&\|\partial^{\leq m+1}u\|_{L_t^2L_x^2(r\in[R,R+1])}.
\end{Eq*}
For the second term in \cref{E4.14}, 
we have \cite[eq.(5.25)]{MR3691393}
\begin{Eq}\label{E4.15}
\ppsi_R[\square_\g,Z^{\leq m}]u=\partial_x(\beta_1\ppsi_R \partial Z^{\leq {m-1}}u)+\beta_2 \ppsi_R \partial Z^{\le {m-1}}u+
\beta_1 \ppsi_R Z^{\le {m-1}}F,
\end{Eq}
with \cite[eq.(5.24)]{MR3691393}
\begin{Eq*}
\|\beta_1\|_{l_1^{1} L_t^\infty L_x^\infty}+\||\partial \beta_1|+|\beta_2|\|_{l_1^{2} L_t^\infty L_x^\infty}\lesssim 1 .
\end{Eq*}
Since we always have $0< s<1$, 
with the help of the duality of \cref{E3.1}, 
the first term in \cref{E4.15} with $(W^{1-s}\cap X^{1-s})'$ norm can be controlled by
\begin{Eq*}
& \|\partial_x(\beta_1\ppsi_R \partial Z^{\leq {m-1}}u)\|_{(X^0)'\cap (X^1)'}\\
\lesssim&\|\partial_x(\beta_1\ppsi_R \partial Z^{\leq {m-1}}u)\|_{l_1^{\frac{1}{2}}L_t^2L_x^2}+\|\beta_1\ppsi_R \partial Z^{\leq {m-1}}u\|_{l_1^{\frac{1}{2}}L_t^2L_x^2}\\
\lesssim&\|\ppsi_R \partial Z^{\leq {m}}u\|_{l_\infty^{-\frac{1}{2}}L_t^2L_x^2}+\|\partial^{\leq m}u\|_{L_t^2L_x^2(r\in[R,R+1])}.
\end{Eq*}
With the help of the duality of \cref{E3.2}, 
the second term in \cref{E4.15} with $(W^{1-s}\cap X^{1-s})'$ norm can be controlled by
\begin{Eq*}
\|\beta_2 \ppsi_R \partial Z^{\le {m-1}}u\|_{l_2^{\frac{3}{2}-s}L_t^2L_x^2}\lesssim \|\ppsi_R \partial Z^{\le {m-1}}u\|_{l_\infty^{-\frac{1}{2}}L_t^2L_x^2}.
\end{Eq*}
By duality of \cref{E1.6} and \emph{Minkowski} inequality, 
the third term in \cref{E4.15} together with the last term in \cref{E4.14} with $(W^{1-s}\cap X^{1-s})'$ norm can be controlled by
\begin{Eq*}
&\|(\ppsi_R-\ppsi_R^p)(\beta_1 Z^{\leq {m-1}}F+Z^{\leq m}F)\|_{l_2^{1-s}L_t^1L_x^2}\\
&+\|r^{3-s-\frac{3p}{o}}\ppsi_R^p(\beta_1 Z^{\leq {m-1}}F+Z^{\leq m}F)\|_{L_t^2\L_r^{\frac{o}{p}}L_\omega^2}\\
\lesssim&\|\partial^{\leq m}F\|_{L_t^1L_x^2(r\in[R,R+1])}+\|r^{3-s-\frac{3p}{o}}\ppsi_R^pZ^{\leq m}F\|_{L_t^2\L_r^{\frac{o}{p}}L_\omega^2}.
\end{Eq*}

In summary,
recall the definition of $\|\cdot\|_{LE_\g}$ in \cref{E1.4}, 
we conclude
\begin{Eq*}
\|u\|_{U^m}\lesssim& \|\partial^{\leq 1}Z^{\leq m}u(0,x)\|_{\H_x^{s-1}}+\|Z^{\leq m}u\|_{L\!E_\g}+\|F\|_{V^m}.
\end{Eq*}
Then we get \Le{L4.7} from \Pr{P4.3} and a direct calculation as \cite{MR3691393}.

\subsection{Proof of \Le{L4.8}}\label{sec:4.5}~

Here, 
we only give the proof of \cref{E4.12} and omit the similar proof of \cref{E4.13}. 
Noticing that
\begin{Eq*}
|Z^{\leq 2}F_\iota(u_\hiota)|\lesssim |u_\hiota|^{p_\iota-1}|Z^{\leq2}u_\hiota|+|u_\hiota|^{p_\iota-2}|Z u_\hiota|^2\equiv \Roma1_\iota+\Roma2_\iota,
\end{Eq*}
which will be handled separately. 

To control the first semi-norm in \cref{E4.9},
we apply \emph{Sobolev}'s inequality on sphere to get
\begin{eqnarray*}
 \|Z^{\leq 2}F_\iota(u_\hiota)\|_{L_\omega^2} 
 & \les & 
\| u_\hiota\|_{L_\omega^\infty} ^{p_\iota-1}
\|Z^{\leq2}u_\hiota\|_{L_\omega^2}
+\| u_\hiota\|_{L_\omega^\infty} ^{p_\iota-2}
\|Z^{\leq 1}u_\hiota\|_{L_\omega^4}^{2}
 \\
 & \les & 
\|Z^{\leq2}u_\hiota\|_{L_\omega^2}^{p_\iota}\ .
\end{eqnarray*}
By \cref{E4.10}, 
\cref{E1.5}
and
\emph{H\"older's} inequality,
we see that
\begin{Eq*}
\|\ppsi_R^{p_\iota} r^{3-{s_\iota}-\frac{3p_\iota}{o_\hiota}} Z^{\leq 2}F_\iota(u_\hiota)\|_{L_t^2\L_r^{\frac{o_\hiota}{p_\iota}}L_\omega^2}\lesssim&\|\ppsi_R r^{\frac{3-{s_\iota}-\frac{3p_\iota}{o_\hiota}}{p_\iota}} Z^{\leq 2}u_\hiota\|_{L_t^{2p_\iota}\L_r^{o_\hiota}L_\omega^{2}}^{p_\iota}\\
=&\|\ppsi_R r^{\frac{3}{2}+\alpha_\hiota-\frac{1}{q_\hiota}-\frac{3}{o_\hiota}-s_\hiota} Z^{\leq 2}u_\hiota\|_{L_t^{q_\hiota}\L_r^{o_\hiota}L_\omega^2}^{p_\iota}\\
\lesssim&  A_{q_\hiota\alpha_\hiota/2}(T)^{-\frac{2p_\iota}{q_\hiota}}\|u_\hiota\|_{U_\hiota^2}^{p_\iota}.
\end{Eq*}

To control the second semi-norm in \cref{E4.9}, we begin by dealing with $\Roma1_\iota$. For the part $r\ge R+2$, we observe that
\begin{Eq*}
\|\Roma1_\iota\|_{L_t^1\L_r^2L_\omega^2(r\geq R+2)}\lesssim&\|r^{\frac{1}{p_\iota(p_\iota-1)}}u_\hiota\|_{L_t^{p_\iota}\L_r^{\frac{2p_\iota(p_\iota-1)}{(p_\iota-2)}}L_\omega^\infty(r\geq R+2)}^{p_\iota-1}\|r^{-\frac{1}{p_\iota}}\ppsi_RZ^{\leq 2}u_\hiota\|_{L_t^{p_\iota}\L_r^{p_\iota}L_\omega^2}\\
\lesssim& \|r^{-\frac{1}{p_\iota}}\ppsi_R Z^{\leq 2}u_\hiota\|_{L_t^{p_\iota}\L_r^{p_\iota}L_\omega^2}^{p_\iota}=\|\ppsi_R Z^{\leq 2} u_\hiota\|_{\widetilde{\DS}_{\hiota}}^{p_\iota}\lesssim\|u_\hiota\|_{U_\hiota^2}^{p_\iota},
\end{Eq*}
where we have applied \cref{E4.3} in the second inequality. 
Meanwhile, for the spatial compact region $r\le R+2$,
by \emph{Sobolev} embedding $\H_x^{2}\cap \H_x^1\subset L_x^\infty$, 
we see
\begin{Eq*}
\|\Roma1_\iota\|_{L_t^1 L_x^2(r\leq R+2)}\lesssim&\|u_\hiota\|_{(L_t^{2}\cap L_t^\infty) L_x^\infty(r\leq R+2)}^{p_\iota-1}\|\partial^{\leq 2}u_\hiota\|_{L_t^{2} L_x^2(r\leq R+2)}\\
\lesssim& \|\partial^{\leq 2}u_\hiota\|_{L\!E_\g}^{p_\iota}\lesssim\|u_\hiota\|_{U_\hiota^2}^{p_\iota}.
\end{Eq*}

The treatment of $\Roma2_\iota$ is similar.
For the spatial compact region,
we obtain 
\begin{Eq*}
\|\Roma2_\iota\|_{L_t^1L_x^2(r\leq R+2)}\lesssim&\|u_\hiota\|_{ L_t^\infty L_x^\infty(r\leq R+2)}^{p_\iota-2}\|\pa u_\hiota\|_{L_t^{2} L_x^4(r\leq R+2)}^2\\
\lesssim& \|\partial^{\leq 2}u_\hiota\|_{L\!E_\g}^{p_\iota}\lesssim\|u_\hiota\|_{U_\hiota^2}^{p_\iota}\ ,
\end{Eq*}
 by $\H_x^{2}\cap \H_x^1\subset L_x^\infty$ and $H_x^1\subset L_x^4$.
For the remaining region $r\ge R+2$, we deal with two cases separately: $p_\iota\leq 4$ and $p_\iota> 4$. At first, when $p_\iota\leq 4$,
with the help of \cref{E4.3} and \cref{E4.4},  we have
\begin{Eq*}
\|\Roma2_\iota\|_{L_t^1\L_r^2L_\omega^2(r\geq R+2)}\lesssim&\|r^{\frac{1}{p_\iota}}u_\hiota\|_{L_t^{p_\iota}\L_r^\infty L_\omega^\infty(r\geq R+2)}^{p_\iota-2}\|r^{\frac{1}{p_\iota}-\frac{1}{2}}Z u_\hiota\|_{L_t^{p_\iota}\L_r^4L_\omega^4(r\geq R+2)}^2\\
\lesssim& \|r^{-\frac{1}{p_\iota}}\ppsi_RZ^{\leq 2}u_\hiota\|_{L_t^{p_\iota}\L_r^{p_\iota}L_\omega^2}^{p_\iota}=\|\ppsi_R Z^{\leq 2} u_\hiota\|_{\widetilde{\DS}_{\hiota}}^{p_\iota}\lesssim\|u_\hiota\|_{U_\hiota^2}^{p_\iota}\ .
\end{Eq*}
When $p_\iota>4$, by \cref{E4.3}, we  get
\begin{Eq*}
\|\Roma2_\iota\|_{L_t^1\L_r^2L_\omega^2(r\geq R+2)}\lesssim&\|r^{\frac{2}{p_\iota(p_\iota-2)}}u_\hiota\|_{L_t^{p_\iota}\L_r^\frac{2p_\iota(p_\iota-2)}{p_\iota-4} L_\omega^\infty(r\geq R+2)}^{p_\iota-2}\|r^{-\frac{1}{p_\iota}}Z u_\hiota\|_{L_t^{p_\iota}\L_r^{p_\iota}L_\omega^{p_\iota}(r\geq R+2)}^2\\
\lesssim& \|r^{-\frac{1}{p_\iota}}\ppsi_R Z^{\leq 2}u_\hiota\|_{L_t^{p_\iota}\L_r^{p_\iota}L_\omega^2}^{p_\iota}=\|\ppsi_R Z^{\leq 2} u_\hiota\|_{\widetilde{\DS}_{\hiota}}^{p_\iota}\lesssim\|u_\hiota\|_{U_\hiota^2}^{p_\iota}.
\end{Eq*}
In summary,  we finish the proof of \cref{E4.12}, as we have
$A_{q_\hiota\alpha_\hiota/2}(T)\le 1$ by definition.

\subsection{Proof of \Th{T4.1} with $p_1=2$}~

Firstly we consider the case $p_2>p_1=2$.
The main difference here is that,
$s_2$ defined in \cref{E4.1} is zero but \Co{C4.6} requires $s_\iota>0$. 

To overcome the difficulty, 
we follow the idea of \cite{LMSTW}.
The key observation is that,
although the homogeneous estimate is hard to prove or even fail to hold,
we could still prove the inhomogeneous estimate in \cref{E4.6} with $s=0$.
Actually,
by \cref{E4.5} with $s=0$,
if $v$ has vanishing initial data at $t=T$,
we obtain, for $t\in [0, T]$,
\begin{Eq*}
\|v\|_{W^1\cap X^1}\lesssim \|\square_{\tilde\g} v\|_{(W^0\cap X^0)'},
\end{Eq*} 
where $\tilde\g$ is the same as that in the proof of \Co{C4.6}. 
Then,
by  duality for  the wave  operator $\square_{\tilde\g}$, we still have the desired inhomogeneous estimate
\begin{Eq*}
\|u\|_{W^0\cap X^0}\lesssim\|\square_{\tilde\g} u\|_{(W^1\cap X^1)'}
=\|\square_\g u\|_{(W^1\cap X^1)'}\ ,
\end{Eq*} 
provided that $u$ has vanishing initial data at $t=0$
and vanishes in a region $\{r<R+1\}$ with $R\gg 1$. 
Therefore, 
we require $\supp (f_2,g_2)\subset B_R$ to ensure that $\ppsi_R Z^{\leq 2}u_2$ has zero initial data.

Another difference in $p_1=2$ is that in this situation, $q=2$, $s=1$ no longer satisfy the condition of \cref{E1.6}, so we modify the definition of $q_\iota$ in \cref{E4.7} to $q_\iota\dyw (2-\delta)p_\hiota$ and correspondingly modify \cref{E4.9} to
\begin{Eq*}
\|F\|_{V_\iota^m}=&\|\ppsi_R^{p_\iota} r^{\frac{7}{2}-\frac{1}{2-\delta}-{s_\iota}-\frac{3p_\iota}{o_\hiota}} Z^{\leq m}F\|_{L_t^{2-\delta}\L_r^{\frac{o_\hiota}{p_\iota}}L_\omega^2}+\|Z^{\leq m}F\|_{L_t^1L_x^2}\ .
\end{Eq*}
Then $q=(2-\delta)'$, $s=1$ satisfy the condition of \cref{E1.6}, and we obtain
\begin{Eq*}
\frac{7}{2}-\frac{1}{2-\delta}-s_\iota-\frac{3p_\iota}{o_\hiota}=p_\iota(\frac{3}{2}+\alpha_\hiota-\frac{1}{q_\hiota}-\frac{3}{o_\hiota}-s_\hiota).
\end{Eq*}
The rest of the proof is almost the same, and we leave the details for the interested reader.

As for the second case $p_1=p_2=2$, we shall require $\supp(f_j, g_j)\subset B_R$, and then the proof is similar.

Finally, if we additionally assume \eqref{H3},  we could recover the homogeneous estimate in \cref{E4.6} with $s=0$, as in \cite[eq. (6.13)]{MR3691393}. With the help of this observation, we could prove the same result, without the compact requirement.

\subsection*{Acknowledgment}
This work was supported by NSFC 11671353 and 11971428.


\begin{thebibliography}{10}

\bibitem{MR1785116}
Rentaro Agemi, Yuki Kurokawa, and Hiroyuki Takamura.
\newblock Critical curve for {$p$}-{$q$} systems of nonlinear wave equations in
  three space dimensions.
\newblock {\em J. Differential Equations}, 167(1):87--133, 2000.

\bibitem{MR0482275}
J\"{o}ran Bergh and J\"{o}rgen L\"{o}fstr\"{o}m.
\newblock {\em Interpolation spaces. {A}n introduction}.
\newblock Springer-Verlag, Berlin-New York, 1976.
\newblock Grundlehren der Mathematischen Wissenschaften, No. 223.

\bibitem{MR2033494}
Daniele Del~Santo, Vladimir Georgiev, and Enzo Mitidieri.
\newblock Global existence of the solutions and formation of singularities for
  a class of hyperbolic systems.
\newblock In {\em Geometrical optics and related topics ({C}ortona, 1996)},
  volume~32 of {\em Progr. Nonlinear Differential Equations Appl.}, pages
  117--140. Birkh\"{a}user Boston, Boston, MA, 1997.

\bibitem{MR2769870}
Daoyuan Fang and Chengbo Wang.
\newblock Weighted {S}trichartz estimates with angular regularity and their
  applications.
\newblock {\em Forum Math.}, 23(1):181--205, 2011.

\bibitem{HMSSZ}
Kunio Hidano, Jason Metcalfe, Hart F. Smith, 
Christopher D. Sogge,  and Yi Zhou.
\newblock On abstract Strichartz estimates and the Strauss conjecture for nontrapping obstacles.
\newblock  {\em Trans. Amer. Math. Soc.}, 362(5):2789--2809, 2010.


\bibitem{MR3552253}
Kunio Hidano, Chengbo Wang, and Kazuyoshi Yokoyama.
\newblock Combined effects of two nonlinearities in lifespan of small solutions
  to semi-linear wave equations.
\newblock {\em Math. Ann.}, 366(1-2):667--694, 2016.


\bibitem{JWY12}
Jin-Cheng Jiang, Chengbo Wang, and Xin Yu.
\newblock Generalized and weighted {S}trichartz estimates.
\newblock {\em Commun. Pure Appl. Anal.}, 11(5):1723--1752, 2012.


\bibitem{MR535704}
Fritz John.
\newblock Blow-up of solutions of nonlinear wave equations in three space
  dimensions.
\newblock {\em Manuscripta Math.}, 28(1-3):235--268, 1979.

\bibitem{MR1386767}
Ta-Tsien Li and Yi~Zhou.
\newblock A note on the life-span of classical solutions to nonlinear wave
  equations in four space dimensions.
\newblock {\em Indiana Univ. Math. J.}, 44(4):1207--1248, 1995.

\bibitem{LMSTW}
Hans Lindblad, Jason Metcalfe, Christopher~D. Sogge, Mihai Tohaneanu, and
  Chengbo Wang.
\newblock The {S}trauss conjecture on {K}err black hole backgrounds.
\newblock {\em Math. Ann.}, 359(3-4):637--661, 2014.

\bibitem{MR2217314}
Jason Metcalfe and Christopher~D. Sogge.
\newblock Long-time existence of quasilinear wave equations exterior to
  star-shaped obstacles via energy methods.
\newblock {\em SIAM J. Math. Anal.}, 38(1):188--209, 2006.

\bibitem{MR3743157}
Jason Metcalfe and David Spencer.
\newblock Global existence for a coupled wave system related to the {S}trauss
  conjecture.
\newblock {\em Commun. Pure Appl. Anal.}, 17(2):593--604, 2018.

\bibitem{MR2944027}
Jason Metcalfe and Daniel Tataru.
\newblock Global parametrices and dispersive estimates for variable coefficient
  wave equations.
\newblock {\em Math. Ann.}, 353(4):1183--1237, 2012.

\bibitem{MR3724252}
Jason Metcalfe and Chengbo Wang.
\newblock The {S}trauss conjecture on asymptotically flat space-times.
\newblock {\em SIAM J. Math. Anal.}, 49(6):4579--4594, 2017.

\bibitem{SoWa10}
Christopher~D. Sogge and Chengbo Wang.
\newblock Concerning the wave equation on asymptotically {E}uclidean manifolds.
\newblock {\em J. Anal. Math.}, 112:1--32, 2010.

\bibitem{MR503903}
Hans Triebel.
\newblock {\em Interpolation theory, function spaces, differential operators},
  volume~18 of {\em North-Holland Mathematical Library}.
\newblock North-Holland Publishing Co., Amsterdam-New York, 1978.

\bibitem{MR3691393}
Chengbo Wang.
\newblock Long-time existence for semilinear wave equations on asymptotically
  flat space-times.
\newblock {\em Comm. Partial Differential Equations}, 42(7):1150--1174, 2017.




\end{thebibliography}
\end{document}